\def\mcomment#1{}
\def\mcommentlast#1{}
\def\mcommentred#1{}
\def\jcomment#1{}
\newtheorem{theorem}{Theorem}
\newtheorem{lemma}{Lemma}
\newtheorem{corollary}{Corollary}
\newtheorem{proposition}{Proposition}
\newtheorem{definition}{Definition}
\newtheorem{algorithm}{Algorithm}
\newtheorem{remark}{Remark}
\newcommand{\satop}[2]{\stackrel{\scriptstyle{#1}}{\scriptstyle{#2}}}
\def\Hom{{{\mathrm {Hom}}}}
\def\bbK{{{\mathbb K}}}
\def\bbX{{{\mathbb X}}}
\def\bbN{{{\mathbb N}}}
\def\bbZ{{{\mathbb Z}}}
\def\bbC{{{\mathbb C}}}
\def\pr{{{\mathrm{pr}}}}
\begin{document}

\title{\scshape On the fast computation of the weight enumerator
polynomial and the $t$ value of digital nets over finite abelian groups}

\author{Josef Dick \\ School of Mathematics and Statistics,\\
University of New South Wales, \\
Sydney, NSW, 2052, Australia \\ \texttt{josef.dick@unsw.edu.au} \\ and \\ Makoto Matsumoto \\ Graduate School of Mathematical Science \\
University of Tokyo \\ 3-8-1 Komaba, Meguro-ku Tokyo 153-8914 \\ \texttt{matumoto@ms.u-tokyo.ac.jp} }

\date{\today}
\maketitle

\begin{abstract}
In this paper we introduce digital nets over finite abelian groups which contain digital nets over finite fields and certain rings as a special case. We prove a MacWilliams type identity for such digital nets. This identity can be used to compute the strict $t$-value of a digital net over finite abelian groups. If the digital net has $N$ points in the $s$ dimensional unit cube $[0,1]^s$, \jcomment{$\leftarrow$} then the $t$-value can be computed in $\mathcal{O}(N s \log  N)$ operations and the weight enumerator polynomial can be computed in $\mathcal{O}(N s (\log  N)^2)$ operations, where operations mean arithmetic of integers. By precomputing some values the number of operations of computing the weight enumerator polynomial can be reduced further.% to $\mathcal{O}(N s)$ operations.
\mcomment{ I erased all $b$ in the $\log (N)$ appeared inside $O()$. Anyway it is true. It is strange that if $b$ increases, then complexity goes down.}
\end{abstract}

\section{Introduction}
Digital nets are point sets
$\{\boldsymbol{x}_0,\ldots, \boldsymbol{x}_{N-1}\}$
in the $s$ dimensional unit cube $[0,1)^s$ with $N = b^m$ points (where $N, b \ge 2$ and $m \ge 1$ are integers),
whose construction is based on linear algebra over a finite field
(or more generally
%certain
finite rings) \cite{DP10, LNS, niesiam}.
\mcomment{I added \cite{LNS} and removed ``certain'' finite rings. Logically you are correct, since they deals with commutative associative with unit, but actually any finite ring will do.}
The aim of these constructions is to obtain highly uniformly distributed point sets. Such point sets are useful as quadrature points for quasi-Monte Carlo rules
\begin{equation*}
\frac{1}{N} \sum_{l =0}^{N-1} f(\boldsymbol{x}_l)
\end{equation*}
which are used to approximate integrals
$\int_{[0,1)^s} f(\boldsymbol{x}) \,\mathrm{d} \boldsymbol{x}$.
\mcomment{$[0,1]\to [0,1)$.}
The Koksma-Hlawka inequality states that the error is bounded by the variation of the integrand $f$ times the discrepancy of the quadrature points \cite[Chapter~2]{DP10}. One measure of the distribution properties of digital nets is the concept of $(t,m,s)$-nets in base $b$ \cite{DP10, niesiam}. A point set $P$ in $[0,1)^s$ consisting of $b^m$ points is a $(t,m,s)$-net in base $b$ if every interval of the form $$\prod_{i=1}^s \left[\frac{a_i}{b^{d_i}}, \frac{a_i+1}{b^{d_i}}\right)$$ for every choice of integers $0 \le a_i < b^{d_i}$ and integers $d_1,\ldots, d_s \ge 0$ with $d_1 + \cdots + d_s = m-t$, contains $b^t$ points. Thus the $t$-value measures the quality of $(t,m,s)$-nets (where smaller is better) \cite{DP10, niesiam}. If $t$ is the smallest integer such that the above property holds, then $P$ is a strict digital $(t,m,s)$-net and we call $t$ the exact $t$-value of $P$. \jcomment{$\leftarrow$}

Many explicit constructions of $(t,m,s)$-nets with good quality
parameter $t$ are known, see \cite{faure,nie86, niexi,sob} or \cite[Chapter~8]{DP10}.
These are based on the concept of digital nets which we recall
\mcomment{introduce $\to$ recall}
in the following. For a positive integer $b$,
$\bbZ_b=\{0,1,\ldots,b-1\}$ denotes the residue ring modulo $b$.
For a prime power $q$, $\mathbb{F}_q$ denotes the
$q$-element finite field.
\mcomment{$\leftarrow$ I added the definitions. May be moved to the beginning of this section.} %Let $q$ be a prime power and $\mathbb{F}_q$ be the finite field with $q$ elements.
Let $C_1,\ldots, C_s \in \mathbb{F}_q^{n \times m}$ be $n \times m$ matrices over a finite field $\mathbb{F}_q$ (with $n \ge m$). Let $\varphi:\mathbb{Z}_q \to \mathbb{F}_q$ be a bijection. For $0 \le l < q^m$ let
\begin{equation*}
l = \sum_{r=0}^{m-1} l_r q^r, \quad \mbox{where } l_r \in \mathbb{Z}_q,
\end{equation*}
be the base $q$ expansion of $l$. Let $\vec{l} = (\varphi(l_0), \ldots, \varphi(l_{m-1}))^\top \in \mathbb{F}_q^m$ and
\begin{equation*}
\vec{y}_{j,l} = C_j \vec{l}, \quad \mbox{for } 1 \le j \le s,
\end{equation*}
where $\vec{y}_{j,l} = (y_{j,l,1},\ldots, y_{j,l,n})^\top \in \mathbb{F}_q^n$. Then we define
\begin{equation*}
x_{l,j} = \frac{\varphi^{-1}(y_{j,l,1})}{q} + \cdots + \frac{\varphi^{-1}(y_{j,l,n})}{q^n}, \quad \mbox{for } 1 \le i \le s.
\end{equation*}
The point set $\boldsymbol{x}_l = (x_{l,1},\ldots, x_{l,s})$ for $0 \le l < q^m$ is called a digital net (over $\mathbb{F}_q$). A digital net which is a (strict) $(t,m,s)$-net is called a (strict) digital $(t,m,s)$-net.

Let $C_j = (\mathbf{c}_{j,1}^\top,\ldots, \mathbf{c}_{j,m}^\top)^\top$, i.e., $\mathbf{c}_{j,k}$ denotes the $k$th row of $C_j$. Then the condition that $C_1,\ldots, C_s$ generate a digital $(t,m,s)$-net over $\mathbb{F}_q$ is equivalent to the condition that for all nonnegative integers $d_1,\ldots, d_s$ with $d_1 + \cdots + d_s = m-t$, the vectors $$\mathbf{c}_{1,1},\ldots, \mathbf{c}_{1,d_1},\ldots, \mathbf{c}_{s,1},\ldots, \mathbf{c}_{s,d_s}$$ are linearly independent \cite[Theorem~4.52]{DP10}. \jcomment{$\leftarrow$ Added reference} Using this definition it is expensive to compute the exact $t$-value (i.e., the smallest value of $t$ for which the linear independence condition holds) since in general many linear independence conditions need to be verified. A direct computation of the $t$-value based on the linear independence properties of the generating matrices over a finite field is presented in \cite{PSch01}. In this paper we show how the exact $t$-value can be computed without checking any linear independence condition. This is done by using a Fourier inversion method which yields a fast algorithm for computing the exact $t$-value. Apart from the explicit constructions of digital nets, computer search algorithms of $(t,m,s)$-nets are also useful in that they often yield digital nets with very small $t$-value \cite{LNS,PSch01,Schm00}. The usefulness of such search algorithms is limited by the size of the search space of digital nets and the computational cost of computing the $t$-value of some given point set \cite{PSch01}.

In this paper we generalize the concept of digital nets over finite
fields \cite{DP10, niesiam} or \mcomment{removed ``certain''} finite
rings
\cite{LNS} to digital nets over a finite abelian group $G$.
We also generalize the concept of duality theory of nets to finite abelian groups as studied in \cite{np}
for finite fields. In our context, the dual net
\mcomment{(I replaced dual space with dual net.)}
is now defined via the
dual group of characters of the finite abelian group $G$.
We show how the quality parameter $t$ of a digital net is related to the
NRT
(Niederreiter~\cite{nie86} and Rosenbloom-Tsfasman~\cite{RT}) weight in the dual group of characters.

Further we introduce an algorithm which allows one to compute the weight
enumerator polynomial \cite{DS02, MS99, T08, vL92} of the dual of \mcomment{(added dual)} a given digital net in $\mathcal{O}(N s (\log  N)^2)$ operations (see Algorithm~\ref{alg1}) and the quality parameter $t$ of a given digital net in $\mathcal{O}(N s \log  N)$ operations (see Algorithm~\ref{alg2}), where operations always mean arithmetic of integers. By precomputing some values or using a faster polynomial multiplication algorithm this construction cost can (theoretically) be reduced further. For instance, if one wants to compute the $t$-value of many digital nets (as in a computer search algorithm which generates many digital nets and then chooses the best one, as in \cite{PSch01,Schm00}, or one wishes for instance to optimize the direction numbers of a Sobol' sequence as in \cite{KJ08}), it can be beneficial to store some values which might have to be computed repeatedly otherwise. It is also possible to balance the computational cost and storage cost in different ways, as will be clear from the result below.

The main idea for the fast computation of the $t$-value is to interpret the worst-case error of integration in the Walsh space \cite{DP05} in a different form. This yields another way of measuring the quality of a digital net (which is not based on the $t$-value). In this case a possible criterion is of the form
\begin{equation*}
-1 + \frac{1}{b^m} \sum_{l=0}^{b^m-1} \sum_{\boldsymbol{k} \in \{0,\ldots, b^m-1\}^s} \left(b^{-1}\right)^{\mu(\boldsymbol{k})} \mathrm{wal}_{\boldsymbol{k}}(\boldsymbol{x}_l),
\end{equation*}
where $\mu(\boldsymbol{k})$ is the NRT weight \jcomment{replaced 'some function' with 'NRT weight'} which measures the magnitude of $\boldsymbol{k}$ in some sense and $\mathrm{wal}_{\boldsymbol{k}}$ is the $\boldsymbol{k}$th Walsh function (for details see Section~\ref{sec:net-group}). In the following we replace $b^{-1}$ by a variable $z$, thus we consider
\begin{equation*}
-1 + \frac{1}{b^m} \sum_{l=0}^{b^m-1} \sum_{\boldsymbol{k} \in \{0,\ldots, b^m-1\}^s} z^{\mu(\boldsymbol{k})} \mathrm{wal}_{\boldsymbol{k}}(\boldsymbol{x}_l).
\end{equation*}
We view this expression as a polynomial in the variable $z$, i.e., we do not substitute any value for $z$, but rather aim at expressing this polynomial in the form
\begin{equation*}
\sum_{a=1}^{sm} z^a N_a.
\end{equation*}
This polynomial is shown to be the weight enumerator polynomial of
{\em the dual net} $\mathcal{D}$, see \cite[Definition~4.76]{DP10} or Section~\ref{sec:net-group} for details,
namely we have a MacWilliams type identity
\begin{equation*}
\frac{1}{b^m} \sum_{l=0}^{b^m-1} \sum_{\boldsymbol{k} \in \{0,\ldots, b^m-1\}^s} z^{\mu(\boldsymbol{k})} \mathrm{wal}_{\boldsymbol{k}}(\boldsymbol{x}_l) = \sum_{a=0}^{ms} z^a N_a.
\end{equation*}
Different but similar identities have previously been studied in coding theory, association schemes, orthogonal arrays, digital nets and so on \cite{DS02, MS99, T08, vL92}. In particular, a similar approach was considered in \cite{T08}, but with a different aim.

Here the aim is to compute the coefficients $N_a$ for $1 \le a \le m$. We show that these coefficients can be computed in $\mathcal{O}(N s (\log  N))$ operations. On the other hand, since these coefficients are related to the $t$-value of the digital net, we can also compute the $t$-value of a digital net in $\mathcal{O}(N s (\log  N))$ operations (see Algorithm~\ref{alg1}).

A second result uses a MacWilliams identity in the reverse direction
(see Algorithm~\ref{alg2}). \mcomment{I added alg2.}
This simplifies the computation of the $t$-value,
but it does not yield the weight enumerator polynomial.
The construction cost of this algorithm is $\mathcal{O}(N s \log  N)$
operations.
%In our numerical experiments of computing the $t$-value of Sobol's
%sequence, we found that the second algorithm is significantly faster
%(we did not use precomputation for both algorithms).
\mcomment{I commented out the comparison of two algorithms, since it is
below.}

We present some background in the following
section. Section~\ref{sec:net-group}
introduces the notion of $(t,m,s)$-nets over {\em finite abelian groups},
\mcomment{($\leftarrow$)}
and deals with the proof of the main
results and gives some discussion of implementing the algorithm.  We
also include some results if the point set is not a digital net. In Section~\ref{sec:num-res}, as a proof of concept, we use our methods
to compute the exact $t$-values of Sobol's sequence \cite{sob} (as implemented in
Matlab 2011a) for dimensions $3 \le s \le 22$ and number of points $2^m$
with $2 \le m \le 25$. \jcomment{Corrected the range of $s$ and $m$ according to the result in the table} In our experiments, Algorithm~\ref{alg2} is
slightly faster than Algorithm~\ref{alg1} for computing the
$t$-value (we did not use precomputation for both algorithms). \jcomment{Changed 'significantly' to 'slightly'}
\mcomment{I moved the last sentence from above.}
In Section~\ref{sec:group-ring} we introduce a further
\mcomment{(I added further)}
generalization of $(t,m,s)$-nets and digital nets, over finite abelian
groups. We also study for which finite rings the ring-theoretic dual net
coincides with the character-theoretic dual net used for digital nets over finite abelian groups.

%\mcommentlast{The next paragraph seems duplicating with the above:}
%In Section~\ref{sec:num-res} we present some numerical results.
%In Section~\ref{sec:group-ring} we discuss a further generalization of $(t,m,s)$-nets constructed over finite abelian groups, and investigate relations between the character-theoretic dual %space of digital nets over finite abelian groups (defined by using dual groups) the ring-theoretic dual net over finite rings (defined by using inner products in \cite{np} for the finite field cases). \mcomment{I removed ``space'' from the dual space of digital nets, because the dual space of $P$ is $P^*$, not $P^\perp$.}

\section{Digital nets and Walsh functions over finite abelian groups}
\label{sec:net-group}

Let $(G,+)$ be a finite abelian group of $b$ elements. Let $T = \{z \in \mathbb{C}: |z|=1\}$ be the multiplicative group of complex numbers
of absolute value one.\index{}
\begin{definition} Let $G$ be a finite abelian group.
The dual group of $G$ is defined by $G^*:=\Hom(G,T)$,
namely the set of group homomorphisms from $G$ to $T$,
often called {\em characters} of $G$.
\end{definition}
The following results are well-known~\cite[Part I]{Serre}. Let $G, G_1, G_2$ be
finite abelian groups.
\begin{lemma}\label{lem:character}
\begin{enumerate}
\item \label{exponent}
Let $e$ be the exponent of $G$, namely, the maximum order of
elements in $G$. Then, the image of $\Hom(G,T)$ is exactly
the cyclic group $<\zeta_e>$ of order $e$, where
$\zeta_e:=\exp(2 \pi \mathrm{i}/e)$.
\item \label{isomorphism}
$G^*$ is isomorphic to $G$ as a finite abelian group,
but there is no canonical choice of the isomorphism.
\item \label{dual-map}
A group homomorphism $f:G_1 \to G_2$ induces
$f^*:G_2^* \to G_1^*$ by composition $k \in G_2^* \mapsto k\circ f \in
      G_1^*$.
\item \label{involution}
$G \to (G^*)^*$, $x \mapsto (x^{**} : k \mapsto k(x))$
gives an isomorphism, through which we identify $G=(G^*)^*$.

\item \label{surjectivity}
A morphism $f$ is surjective (respectively injective)
if and only if $f^*$ is injective (respectively surjective).

\item \label{product}
The dual of $G_1 \times G_2$ is canonically
isomorphic to $G_1^* \times G_2^*$.

\item \label{orthogonality}
For any $k \in G^*$, the sum $\sum_{x \in G}k(x)$
is $0$ if $k \neq 0$, and $\#(G)$ if $k = 0$.
Dually, for any $x \in G$,
the sum $\sum_{k \in G^*}k(x)$ is $0$ if $x \neq 0$,
and $\#(G)$ if $x = 0$.
\end{enumerate}
\end{lemma}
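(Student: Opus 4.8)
The plan is to reduce everything to the cyclic case via the structure theorem for finite abelian groups, after disposing of the two purely formal items. Part~\ref{dual-map} is immediate: a composition of group homomorphisms into $T$ is again one, so $k \mapsto k \circ f$ maps $G_2^*$ into $G_1^*$, and one checks directly that $\Hom(-,T)$ respects composition and identities, i.e.\ is a contravariant functor. For part~\ref{product} I would use restriction to the factors: the map $(G_1 \times G_2)^* \to G_1^* \times G_2^*$, $\chi \mapsto (\chi|_{G_1}, \chi|_{G_2})$, is a homomorphism with two-sided inverse $(\chi_1, \chi_2) \mapsto \big((x_1,x_2) \mapsto \chi_1(x_1)\chi_2(x_2)\big)$, and this isomorphism is natural, hence canonical.

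Next I would treat the cyclic base case: for $G = \bbZ_n$ a character is determined by the image of the generator, which may be any $n$-th root of unity, so $(\bbZ_n)^* = \{k \mapsto \zeta_n^{jk} : 0 \le j < n\} \cong \bbZ_n$. Combining this with part~\ref{product} and a decomposition $G \cong \bbZ_{d_1} \times \cdots \times \bbZ_{d_r}$ yields part~\ref{isomorphism}, the (non-canonical) isomorphism $G^* \cong G$, and in particular $\#(G^*) = \#(G)$. For part~\ref{exponent}, every $x \in G$ satisfies $ex = 0$, so $k(x)^e = 1$ and the image of each $k$ lies in $\langle \zeta_e \rangle$; conversely, choosing $g \in G$ of order exactly $e$ (the invariant factor decomposition provides a cyclic summand of order $e$) and the character sending that summand's generator to $\zeta_e$ and the others trivially realises all of $\langle \zeta_e \rangle$ as values.

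The crux is the non-degeneracy of part~\ref{involution}. The natural map $G \to (G^*)^*$, $x \mapsto (k \mapsto k(x))$, is visibly a homomorphism; I would prove it injective by exhibiting, for each $x \neq 0$, a character $k$ with $k(x) \neq 1$ --- again by reduction to the cyclic summands, where such a $k$ is explicit. Since $\#((G^*)^*) = \#(G)$ by part~\ref{isomorphism}, an injective homomorphism between groups of equal cardinality is an isomorphism. Part~\ref{surjectivity} then follows formally: a surjection $f$ forces any $k$ with $k \circ f = 0$ to vanish on $G_2$, so $f^*$ is injective; the extension property of characters (a character of a subgroup extends to the whole group, $T$ being divisible) shows an injection dualizes to a surjection; and the two converses come from applying these to $f^*$ and using $f^{**} = f$ from part~\ref{involution}. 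Finally, for part~\ref{orthogonality}, if $k \neq 0$ pick $x_0$ with $k(x_0) \neq 1$; then $k(x_0) \sum_{x \in G} k(x) = \sum_{x \in G} k(x_0 + x) = \sum_{x \in G} k(x)$ forces the sum to be $0$, while $k = 0$ gives $\#(G)$, and the dual relation is this same statement applied to the group $G^*$ and the character $x^{**} \in (G^*)^*$, using part~\ref{involution}.

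The only genuinely non-formal ingredient is the separation of points by characters underlying parts~\ref{exponent} and~\ref{involution}; everything else is a direct computation (the cyclic and product cases) or a formal consequence of functoriality and a counting argument. I therefore expect the main obstacle to be nothing deeper than organising the reductions to cyclic summands cleanly --- in particular ensuring that characters built on individual summands genuinely extend to $G$ through part~\ref{product} --- since beyond the structure theorem no substantial idea is needed.
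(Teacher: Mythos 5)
Your proof is correct. Note that the paper itself offers no proof of this lemma at all --- it simply declares the results well-known and cites Serre --- so there is nothing to compare against; your argument (formal functoriality for the easy items, reduction to cyclic summands via the structure theorem for the cardinality and separation-of-points statements, divisibility of $T$ for extending characters, and the standard translation trick for orthogonality) is the completely standard one and fills that gap adequately.
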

\begin{remark}
The notion of dual $G^*$ generalizes to
locally compact Hausdorff abelian groups $G$
by defining $G^*:=\Hom_{cont}(G, T)$ (namely,
the continuity is required for the characters), and
the above lemma holds for this wider class (with some small adjustments),
where finite abelian groups have discrete topology.
See \cite{Pontryagin} or \cite[Chapter VII]{loomis}
for the basic of such groups and Fourier transformation on them.
\end{remark}
\mcomment{Indices of $\kappa, \xi$ may start with 0 or 1; I made
everything into 1.}

We return to our finite abelian group $G$.
Define the direct product of denumerably many copies of $(G,+)$
and the direct sum of denumerably many copies of $(G^*,+)$
by:
\begin{eqnarray*}
\bbX &:=& G^{\bbN}
= \{ \mathbf{x} = (\xi_1, \xi_2,\ldots ) : \xi_i \in G \}
\mbox{ and } \\
\bbK &:=&
\{ \mathbf{k} = (\kappa_1, \kappa_2,\ldots ) \in (G^*)^{\mathbb{N}}: \kappa_i = 0 \mbox{ for almost all } i\}.
\end{eqnarray*}
%Then $\bbK$ is a
%subgroup of
We remark that $\bbK$ and $\bbX$ are dual to each other,
through the pairing $\bullet$:
$$
\bbK \times \bbX \to T,
\quad (\kappa_1, \kappa_2,\ldots )\bullet (\xi_1, \xi_2, \dots):=
\prod_{i=1}^\infty \kappa_i(\xi_i) \in T.
$$

For a positive integer $n$, we have a truncated version
$$
\bbK_n:=(G^*)^n, \bbX_n:=G^n,
(\kappa_1, \ldots, \kappa_n)\bullet (\xi_1,\ldots \xi_n)
:=\prod_{i=1}^n \kappa_i(\xi_i) \in T.
$$
These finite groups are again dual to each other.

We fix bijections $\varphi:\mathbb{Z}_b \to G$
and $\phi: \mathbb{Z}_b \to G^*$
with $\varphi(0) =0$ and $\phi(0) =0$.
\mcommentlast{I noticed this is necessary to make the bijection below.}
The bijection $\phi$ gives a bijection
denoted by the same symbol by an abuse of notation:
$$
\phi:\bbN_0 \to \bbK
$$
in the following way:
We identify elements in $\bbK$ with a nonnegative integer $k\in \mathbb{N}_0$
with $b$-adic expansion
$k =
\kappa_1 + \kappa_2 b + \cdots + \kappa_{m} b^{m-1}
$
with $\kappa_i \in \bbZ_b$.
Then we define
$
\phi(k):=(\phi(\kappa_1), \ldots, \phi(\kappa_{m}), 0, \ldots) \in
\bbK
$, which is a bijection.
By restriction, we have a truncated version
$$
\phi: \{0,1,\ldots, b^{m}-1\} \to \bbK_m.
$$
Analogously
we use the same symbol $\varphi$ to denote
$$
\varphi: [0,1) \to \bbX
$$
defined as follows. For a number $x \in [0,1)$ with base $b$
expansion $x = \xi_1 b^{-1} + \xi_2 b^{-2} + \cdots$,
we define
$\varphi(x):= (\varphi(\xi_1), \varphi(\xi_2),\ldots) \in \bbX$.
Here we use the finite expansion of $x$ if $x$ is a $b$-adic rational.
Because of this reason
$\varphi: [0,1) \to \bbX$ is not surjective.
However, both have natural measures
(namely, Lebesgue measure and Haar measure)
and $\varphi$ preserves the measure,
through which $[0,1)$ and $\bbX$ are
isomorphic as a measured space when a
measure-zero subset is removed from $\bbX$.
There is also a continuous mapping
$\varphi^{-1}:\bbX \to [0,1]$
\mcomment{($[0,1) \to [0,1]$)}
given by
$
(\xi_1, \xi_2, \ldots)
\mapsto
\varphi^{-1}(\xi_1) b^{-1} + \varphi^{-1}(\xi_2) b^{-2} + \cdots,
$
which is surjective
(and inverse to $\varphi$ if measure zero sets are neglected).
\mcomment{$1$ should be removed from $[0,1]$, and the measure zero set
of $\bbX$ should be removed, hence plural.}

The Walsh functions are then defined the following way: for
$k = \kappa_1 + \kappa_2 b + \cdots + \kappa_{m} b^{m-1}$
 and $x = \xi_1 b^{-1} + \xi_2 b^{-2} + \cdots$ we have
\begin{equation*}
{}_{G}\mathrm{wal}_k(x) = \phi(k)\bullet \varphi(x)=
\prod_{i=1}^m \phi(\kappa_{i})(\varphi(\xi_i)).
% = \mathrm{e}^{2\pi \mathrm{i} (\kappa_0 \xi_1 + \cdots + \kappa_{m-1}\xi_m)/b}.
\end{equation*}
For the $s$-dimensional case, we again use the \jcomment{$\leftarrow$ added 'the'} same letters
$$
\phi:\bbN_0^s \to \bbK^s, \quad
\varphi:[0,1)^s \to \bbX^s.
$$
For $\boldsymbol{k}=(k_1,\ldots k_s)$
and $\boldsymbol{x}=(x_1,\ldots,x_s)$
we define
$$
{}_{G}\mathrm{wal}_{\boldsymbol{k}}(\boldsymbol{x}) = \prod_{i=1}^s
{}_{G}\mathrm{wal}_{k_i}(x_i)
=\phi(\boldsymbol{k})\bullet \varphi(\boldsymbol{x})
$$

%By orthogonality, we have
%\begin{equation*}
%\sum_{\kappa \in G} \theta_\kappa(\xi) = \left\{ \begin{array}{rl} 0 & \mbox{if } \xi \neq 0, \\ b & \mbox{otherwise}. \end{array} \right.
%\end{equation*}
%As a consequence of $\theta_{\kappa}(0) = 1$, we have
%\begin{equation*}
%\sum_{\kappa \in G\setminus \{0\}} \theta_\kappa(\xi) = \left\{ \begin{array}{rl} -1 & \mbox{if } \xi \neq 0, \\ b-1 & \mbox{otherwise}. \end{array} \right.
%\end{equation*}
%%% We need this later, but not here.

In the following we recall the definition of $(t,m,s)$-nets in base $b$
\cite{DP10, niesiam}. \mcomment{$(\leftarrow)$}
\begin{definition}\label{def:tms-net}
Let $b \ge 2$ and $s \ge 1$ be integers. A point set $P = \{\boldsymbol{x}_0,\ldots, \boldsymbol{x}_{b^m-1}\} \subset [0,1)^s$ is called a $(t,m,s)$-net in base $b$ if for all nonnegative integers $d_1,\ldots, d_s$ with $d_1 + \cdots + d_s = m-t$ the elementary intervals
\begin{equation*}
\prod_{i=1}^s \left[\frac{a_i}{b^{d_i}}, \frac{a_i+1}{b^{d_i}} \right)
\end{equation*}
contain exactly $b^t$ points for all choices of $0 \le a_i < b^{d_i}$ for $1 \le i \le s$.

If $t$ is the smallest value such that $P$ is a $(t,m,s)$-net, then we call $P$ a strict $(t,m,s)$-net and $t$ the exact quality parameter (or $t$-value). \jcomment{$\leftarrow$ added statement about 'exact quality parameter'}
\end{definition}
Because of this definition, we may replace the coordinates of the points
to be $m$-digit $b$-adic rationals.
We fix an $n \geq m$, and we consider point sets with coordinates being
all $n$-digit $b$-adic rationals. Namely,
we consider only the image of
$\varphi_n^{-1}: \bbX_n^s \to [0,1)^s.$ (For estimating the computational complexity we use $n = m$ to avoid unnecessary complications.)

We identify  $\bbX_n^s=(G^n)^s$ with the set of
$s\times n$ matrices $(M_{s,n}(G),+)$,
and similarly identify $\bbK_n^s=((G^{*})^n)^s=(M_{s,n}(G^*),+)$. For
$X \in \bbX_n^s$ we write
$X = (\xi_{i,j})_{\satop{1 \le i \le s}{1 \le j \le n}}$ and similarly
$K = (\kappa_{i,j})_{\satop{1 \le i \le s}{1 \le j \le n}}$.
\begin{definition}\label{def:combinatorial-net}
A subset $P \subset \bbX_n^s$ is a combinatorial $(t,m,s)$-net in base $b$
if $\varphi^{-1}(P)$ is a $(t,m,s)$-net in base $b$.
\end{definition}
Note that this definition does not use the group structure of $G$.
Note also that to define $(t,m,s)$-nets it is
enough to consider the case $n=m$, but for the convenience
in future use we choose to have general $n$.
\begin{definition}
Let $P \subset \bbX_n^s$ be a sub abelian group.
We define $P^\perp \subset \bbK_n^s$ as the kernel of
$(\bbK_n)^s = (\bbX_n^s)^* \to P^*$.
In other words,
$$
P^\perp:=
\{K \in \bbK_n^s \ | \ K \bullet X = 1
\mbox{ for any } X \in P
\}.
$$

We call $P^\perp$ the character theoretic dual net
of $P$.
\end{definition}

Let $Y$ be a subset of $\{1,2,\ldots,s\}\times \{1,2,\ldots,n\}$.
Let $\bbX^s_n(Y)$ be the direct product of $\#(Y)$ copies of $G$ with index
in $Y$,
namely:
$$
\bbX_n^s(Y):=\{(\xi_{i,j})| (i,j) \in Y, \xi_{i,j} \in G\}.
$$
We have an obvious projection
$$
\pr_Y: \bbX_n^s \to \bbX_n^s(Y), (\xi_{i,j})_{i,j} \mapsto (\xi_{i,j})_{(i,j)\in Y}.
$$

Let $P$ be a subset of $\bbX_n^s$, with inclusion map $\iota:P \to \bbX_n^s$.
%which is considered as a point set in the $s$-dimensional
%unit cube $[0,1)^s$ through $\varphi^{-1}:\bbX_n^s \to [0,1)^s$, where
%every row of $X \in P \subset \bbX_n^s=M_{s,n}(G)$
%is transformed into a real number as above.
%
A function $f:S_1 \to S_2$ is said to be {\em uniform}
if $\#(f^{-1}(s_2))$ is constant for any $s_2 \in S_2$.
In particular, it implies surjectivity if $S_1$ is non-empty. \mcommentlast{I added.}
Conversely, if $S_1$ and $S_2$ are finite groups
and $f$ is a group homomorphism, then
the surjectivity of $f$
implies uniformity, because
any element $s_1$ in the set $f^{-1}(s_2)$ gives
a bijection $f^{-1}(0) \to f^{-1}(s_2)$ obtained by adding $s_1$.

The point set $P$ is {\em $Y$-uniform} if $\pr_Y \circ \iota:P \to \bbX_n^s(Y)$
is uniform.

\begin{lemma}\label{lem:net-by-uniformness}
Fix a positive integer $m$. A finite subset $P\subset \bbX_n^s$ of
cardinality $b^m$ is a (combinatorial)
$(t,m,s)$-net if and only if the following conditions
are satisfied:

For any integers $d_1,\ldots,d_s\geq 0$,
let $Y:=Y(d_1,\ldots,d_s)$ denote the union of
$$\{(1,1), (1,2),\ldots,(1,d_1)\},
\{(2,1), (2,2),\ldots,(2,d_2)\},
\ldots,
\{(s,1), (s,2),\ldots,(s,d_s)\}.$$
Then, for any $d_1+d_2+\cdots+d_s \leq m-t$,
the composition
$
P \stackrel{\iota}{\hookrightarrow}
\bbX_n^s \stackrel{\pr_Y}{\to}
\bbX_n^s(Y)
$ is uniform, namely, $P$ is $Y$-uniform.
\end{lemma}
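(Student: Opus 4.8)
The plan is to translate the metric condition of Definition~\ref{def:tms-net} into the combinatorial language of fibers of the projections $\pr_Y$, after which the two sides of the asserted equivalence will differ only in whether uniformity is demanded at the single top level $d_1+\cdots+d_s=m-t$ or at all levels $d_1+\cdots+d_s\le m-t$. First I would make the dictionary between elementary intervals and fibers precise. Fix $(d_1,\ldots,d_s)$ and put $Y=Y(d_1,\ldots,d_s)$, so that $\pr_Y$ records exactly the entries $\xi_{i,1},\ldots,\xi_{i,d_i}$ of each row $i$. Under the coordinatewise bijection $\varphi\colon\mathbb{Z}_b\to G$ a value $v\in\bbX_n^s(Y)$ is the same datum as a tuple $(a_1,\ldots,a_s)$ with $0\le a_i<b^{d_i}$, via $a_i=\sum_{j=1}^{d_i}\varphi^{-1}(\xi_{i,j})\,b^{d_i-j}$. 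Since $x_i\in[a_i/b^{d_i},(a_i+1)/b^{d_i})$ precisely when the first $d_i$ base-$b$ digits of $x_i$ are those of $a_i$, and since $\varphi^{-1}\colon\bbX_n^s\to[0,1)^s$ is injective on $b$-adic rationals, one obtains $\varphi^{-1}\bigl((\pr_Y\circ\iota)^{-1}(v)\bigr)=\varphi^{-1}(P)\cap\prod_{i=1}^s[a_i/b^{d_i},(a_i+1)/b^{d_i})$. Hence $\#\bigl((\pr_Y\circ\iota)^{-1}(v)\bigr)$ is exactly the number of net points in the corresponding elementary interval, and as $v$ ranges over $\bbX_n^s(Y)$ the interval ranges over all $b^{d_1+\cdots+d_s}$ intervals of that shape. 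Because $\#P=b^m$ and $\#\bbX_n^s(Y)=b^{d_1+\cdots+d_s}$, the map $\pr_Y\circ\iota$ is uniform exactly when every such interval contains the common number $b^{m-(d_1+\cdots+d_s)}$ of points.

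With this dictionary the equivalence at a single fixed shape is automatic. Taking $d_1+\cdots+d_s=m-t$, the $Y$-uniformity of $P$ says that every elementary interval of that shape contains $b^t$ points, which is precisely Definition~\ref{def:tms-net}. Consequently the implication from the uniformity conditions to the combinatorial net property is trivial: one simply discards all shapes with $d_1+\cdots+d_s<m-t$ and retains those with $d_1+\cdots+d_s=m-t$.

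The substance of the lemma, and the step I expect to be the main obstacle, is the converse: deducing $Y$-uniformity at every shape with $d_1+\cdots+d_s\le m-t$ from the net property, which a priori controls only the top shapes $d_1+\cdots+d_s=m-t$. Here I would argue by refinement. Given a shape with $e:=d_1+\cdots+d_s<m-t$, choose $(d_1',\ldots,d_s')$ with $d_i'\ge d_i$ and $\sum_i d_i'=m-t$; such a choice exists (for instance set $d_1'=d_1+(m-t-e)$ and $d_i'=d_i$ for $i\ge2$) and automatically satisfies $d_i'\le m-t\le n$. Writing $Y=Y(d_1,\ldots,d_s)$ and $Y'=Y(d_1',\ldots,d_s')\supseteq Y$, the projection factors as $\pr_Y=\pi\circ\pr_{Y'}$ for the evident forgetful map $\pi\colon\bbX_n^s(Y')\to\bbX_n^s(Y)$, so that for each $v\in\bbX_n^s(Y)$ the fiber $(\pr_Y\circ\iota)^{-1}(v)$ is the disjoint union of the fibers $(\pr_{Y'}\circ\iota)^{-1}(w)$ over the $b^{(m-t)-e}$ points $w\in\pi^{-1}(v)$. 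By the net property each finer fiber has exactly $b^t$ points, so the coarse fiber has $b^{(m-t)-e}\cdot b^t=b^{m-e}$ points, independently of $v$; this is the required $Y$-uniformity. The only genuine content beyond the dictionary of the first paragraph is this disjoint-union count, the existence of the refinement being immediate from $e\le m-t$ and the bound $\sum_i d_i'=m-t\le n$.
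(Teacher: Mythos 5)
Your proof is correct and follows the only natural route, which is the one the paper itself takes: the paper's proof consists of the single sentence ``This follows from the definition of $(t,m,s)$-nets,'' and your argument is a correct, fully detailed elaboration of exactly that reduction. In particular you rightly identify, and correctly dispatch via the refinement/fiber-counting argument, the one point the paper's one-line proof glosses over, namely that Definition~\ref{def:tms-net} imposes the condition only at shapes with $d_1+\cdots+d_s=m-t$ while the lemma asserts uniformity for all $d_1+\cdots+d_s\leq m-t$.
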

\begin{proof}
This follows from the definition of $(t,m,s)$-nets.
\end{proof}

The surjective homomorphism $\bbX_n^s \to \bbX_n^s(Y)$ induces
an injective homomorphism
$(\bbX_n^s(Y))^* \to (\bbX_n^{s})^*$. Through the identification
$(\bbX_n^{s})^* = \bbK_n^s=M_{s,n}(G^*)$,
$(\bbX_n^s(Y))^*\subset \bbK_n^s$ is identified with
$$
(\bbX_n^s(Y))^*=\{K=(\kappa_{i,j}) | \kappa_{i,j}=0 \mbox{ for all $(i,j) \notin Y$} \}
\subset M_{s,n}(G^*).
$$

The following easy lemma links \jcomment{replaced 'interprets' with 'links'} the $(t,m,s)$-net property of
$P$ to the minimum Niederreiter-Rosenbloom-Tsfasman (NRT) weight
(see, for example, \cite[Section~7.1]{DP10}) of $P^\perp$.
\begin{lemma}\label{lem:duality-in-perp}
Let $P\subset \bbX_n^s$ be a sub group.
For any subset $Y \subset \{1,2,\ldots,s\}\times \{1,2,\ldots,n\}$,
the composition $P \subset \bbX_n^s \to \bbX_n^s(Y)$ is surjective (namely $P$ is $Y$-uniform)
if and only if $P^\perp \cap (\bbX_n^s(Y))^*=\{0\}$.
In other words, if and only if any $K \in P^\perp$
with $\kappa_{i,j}=0$ for all $(i,j)\notin Y$ is $0$.
\mcomment{Original condition seems wrong, I replaced.
Probably, this ``if and only if'' condition may be removed.}
\end{lemma}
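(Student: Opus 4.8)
The plan is to reduce everything to the duality statement in Lemma~\ref{lem:character}(\ref{surjectivity}), which asserts that a homomorphism of finite abelian groups is surjective precisely when its dual is injective. Write $f := \pr_Y \circ \iota : P \to \bbX_n^s(Y)$ for the composition in question; this is a homomorphism of finite abelian groups, so by the discussion preceding the lemma its surjectivity coincides with $Y$-uniformity of $P$. The assertion ``$f$ is surjective'' is then equivalent to ``$f^*$ is injective'', and the whole lemma becomes a matter of computing the kernel of $f^*$ and identifying it with $P^\perp \cap (\bbX_n^s(Y))^*$.

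To carry this out, I would first dualize the factorization $f = \pr_Y \circ \iota$ into $f^* = \iota^* \circ \pr_Y^*$, using functoriality from Lemma~\ref{lem:character}(\ref{dual-map}). Two facts then make the identification automatic. First, by definition $P^\perp$ is exactly the kernel of $\iota^* : \bbK_n^s = (\bbX_n^s)^* \to P^*$. Second, since $\pr_Y$ is surjective, $\pr_Y^*$ is injective with image precisely the subgroup $\{K : \kappa_{i,j} = 0 \text{ for } (i,j) \notin Y\}$ of $\bbK_n^s$ identified with $(\bbX_n^s(Y))^*$ in the paragraph just above the lemma. Thus for $K$ in this subgroup we have $f^*(K) = \iota^*(K)$, which vanishes iff $K \in P^\perp$. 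Hence $\ker f^* = P^\perp \cap (\bbX_n^s(Y))^*$, and the equivalence follows.

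For an unwound version, the same argument reads concretely as follows: a character $\chi \in (\bbX_n^s(Y))^*$ lies in $\ker f^*$ iff $\chi(\pr_Y(X)) = 1$ for all $X \in P$; writing $\tilde\chi := \chi \circ \pr_Y$ as the element of $\bbK_n^s$ supported on $Y$, this says exactly that $\tilde\chi \bullet X = 1$ for all $X \in P$, i.e.\ $\tilde\chi \in P^\perp$. This yields the final reformulation ``any $K \in P^\perp$ with $\kappa_{i,j}=0$ for all $(i,j)\notin Y$ is $0$'' directly.

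I expect no substantive obstacle here; the content is a formal consequence of finite Pontryagin duality. The only point requiring care is bookkeeping: one must keep straight the two roles of $(\bbX_n^s(Y))^*$---as an abstract dual group and as the image of $\pr_Y^*$ inside $\bbK_n^s$---and check that the identification implicit in the expression $P^\perp \cap (\bbX_n^s(Y))^*$ is the one supplied by $\pr_Y^*$. Once the definition of $P^\perp$ as $\ker \iota^*$ and the injectivity and image of $\pr_Y^*$ are in place, the equivalence is immediate.
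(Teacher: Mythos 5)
Your argument is correct and is essentially the paper's own proof: both reduce surjectivity of $\pr_Y\circ\iota$ to injectivity of its dual via Lemma~\ref{lem:character}(\ref{surjectivity}), factor that dual map through $\bbK_n^s\to P^*$ using the identification of $(\bbX_n^s(Y))^*$ with the characters supported on $Y$, and identify the kernel as $(\bbX_n^s(Y))^*\cap P^\perp$ from the definition $P^\perp=\mathrm{Ker}(\bbK_n^s\to P^*)$. Your version merely spells out the bookkeeping that the paper leaves implicit.
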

\begin{proof}
By \ref{surjectivity} in Lemma~\ref{lem:character},
the surjectivity is equivalent to the injectivity of
$\bbX_n^s(Y)^* \to P^*$, which is equivalent to
$\bbX_n^s(Y)^* \cap \mathrm{Ker}(\bbK_n^s \to P^*)=\{0\}$, and the
result follows from the definition of $P^\perp=\mathrm{Ker}(\bbK_n^s \to P^*)$.
\end{proof}
For $k =(\kappa_1,\ldots,\kappa_n)\in (G^*)^n=\bbK_n$ which
is not 0, we define
$$
\mu(k)=\max\{j | 1\leq j \leq n, \kappa_j \neq 0 \}
$$
and for $k=0$ we define $\mu(0):=0$.
For any $K \in (\bbX_n^s)^*=M_{s,n}(G^*)$, let
$k_i$ be the $i$-th row of $K$, and define
\begin{equation}\label{eq:NRT}
\mu(K) = \sum_{i=1}^s \mu(k_i),
\end{equation}
which is nothing but the definition of the NRT-weight of $K$.
For a subgroup $Q\subset \bbK_n^s$, we define
its minimum NRT-weight
\begin{equation}\label{eq:minNRT}
\mbox{minNRT}(Q)
:=\min\{
 \mu (K) | K \in Q, K \neq 0
\}.
\end{equation}
If $Q=\{0\}$, we define $\mbox{minNRT}(Q)= ns+1$.
\mcomment{Originally $(m+1)$, but I changed into $ns+1$.
 In $(t,m,s)$-net case, this occurs only if $s=1$ and $n=m$,
 which returns to the original one.
}
A direct generalization of the duality theorem for digital nets in \cite{np} is the following:
\begin{theorem}\label{th:dual-net-minwt}
Let $P\subset \bbX_n^s$ be a subgroup of cardinality $b^m$.
Then, $P$ is a $(t,m,s)$-net if and only if
$\mbox{minNRT}(P^\perp)\geq m-t+1$.
In other words, $m+1-\mbox{minNRT}(P^\perp)$
gives the exact value of $t$ for $P$, i.e., $P$ is a strict $(t,m,s)$-net.
\end{theorem}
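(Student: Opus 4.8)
The plan is to chain the two preceding lemmas and then convert the resulting combinatorial condition into a single inequality on the minimum NRT weight. The whole argument is essentially a translation exercise: no machinery beyond Lemmas~\ref{lem:net-by-uniformness} and~\ref{lem:duality-in-perp} together with the bookkeeping identity~\eqref{eq:NRT} is required.

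First I would rewrite the net property using Lemma~\ref{lem:net-by-uniformness}: $P$ is a $(t,m,s)$-net if and only if $P$ is $Y(d_1,\ldots,d_s)$-uniform for every tuple of nonnegative integers with $d_1+\cdots+d_s\leq m-t$. I would then feed each such staircase set $Y$ into Lemma~\ref{lem:duality-in-perp}, which states that $Y$-uniformity of $P$ is equivalent to $P^\perp\cap(\bbX_n^s(Y))^*=\{0\}$, i.e.\ to the absence of a nonzero $K\in P^\perp$ with $\kappa_{i,j}=0$ for all $(i,j)\notin Y$. Combining the two, $P$ is a $(t,m,s)$-net precisely when no nonzero element of $P^\perp$ is supported on any staircase set $Y(d_1,\ldots,d_s)$ with $\sum_i d_i\leq m-t$.

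The key step is the passage from ``supported on some staircase set'' to ``small NRT weight.'' Writing $k_i$ for the $i$-th row of $K$, the set $Y(d_1,\ldots,d_s)$ consists exactly of the first $d_i$ positions of row $i$, so $K$ is supported on $Y(d_1,\ldots,d_s)$ if and only if $\mu(k_i)\leq d_i$ for every $i$. Hence a fixed nonzero $K\in P^\perp$ fits into some admissible staircase if and only if the minimal admissible shape $d_i=\mu(k_i)$ already satisfies $\sum_i\mu(k_i)=\mu(K)\leq m-t$, where the equality is the definition~\eqref{eq:NRT} of the NRT weight. Negating the existential over $K$ in the equivalence of the previous paragraph then gives: $P$ is a $(t,m,s)$-net if and only if every nonzero $K\in P^\perp$ has $\mu(K)\geq m-t+1$, which is exactly $\mbox{minNRT}(P^\perp)\geq m-t+1$ by~\eqref{eq:minNRT}.

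For the ``strict/exact'' reformulation I would observe that this inequality is equivalent to $t\geq m+1-\mbox{minNRT}(P^\perp)$, so the admissible values of $t$ form an upward-closed set whose minimum is $m+1-\mbox{minNRT}(P^\perp)$, that is, the exact $t$-value. I expect the only genuine subtlety to be confirming that this value is a legitimate (nonnegative) parameter: one must check $\mbox{minNRT}(P^\perp)\leq m+1$, which is an NRT Singleton-type bound for the code $P^\perp$ of size $b^{ns-m}$, and which simultaneously makes the convention $\mbox{minNRT}(\{0\})=ns+1$ consistent in the degenerate case $P^\perp=\{0\}$ (there $ns=m$, so the value is $m+1$, forcing $n=m$, $s=1$ and $t=0$). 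The rest is pure bookkeeping; the one place to be careful is that a single scalar $\mu(K)$ must rule out $K$ across all staircase shapes $Y$ at once, which is precisely why testing the minimal shape $d_i=\mu(k_i)$ is the correct reduction.
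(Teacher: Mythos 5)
Your proposal is correct and takes essentially the same route as the paper's proof: chain Lemma~\ref{lem:net-by-uniformness} with Lemma~\ref{lem:duality-in-perp}, and then observe that a nonzero $K\in P^\perp$ lies in $(\bbX_n^s(Y(d_1,\ldots,d_s)))^*$ for some admissible shape with $d_1+\cdots+d_s\le m-t$ exactly when $\mu(K)\le m-t$. The paper's version is simply a terser statement of the same argument; your extra remarks on the minimal shape $d_i=\mu(k_i)$ and on the degenerate case $P^\perp=\{0\}$ are correct elaborations rather than deviations.
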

\begin{proof}
By Lemma~\ref{lem:net-by-uniformness},
the $(t,m,s)$ condition is equivalent to
the surjectivity of $\pr_Y\circ \iota$ (since uniformness
is equivalent to the surjectivity for group homomorphisms),
and by Lemma~\ref{lem:duality-in-perp},
the condition is equivalent to that
$P^\perp \cap \bbX_n^s(Y(d_1,\ldots,d_s))^*=\{0\}$
holds for every $d_1+\cdots +d_s\leq m-t$, which is
equivalent to that any $K\neq 0$ with $\mu(K)\leq m-t$
does not belong to $P^\perp$, hence the condition of minimum weight of $P^\perp$.
\end{proof}

\subsection{A MacWilliams identity for digital nets over abelian groups}

In the following we define the weight enumerator polynomial.

\begin{definition}
We define the weight enumerator polynomial $\mathrm{WP}_{P^\perp}(z)$ of $P^\perp$
with respect to the NRT-weight, which is a non-negative integer
coefficient polynomial in variable $z$, by
$$
\mathrm{WP}_{P^\perp}(z) = \sum_{K \in P^\perp} z^{\mu(K)}
=
\sum_{a=0}^{ns} N_a z^{a}. \mcomment{\mbox{not $(n+1)s$}}
$$
\end{definition}
This gives definitions of $N_a$, i.e.,
$N_a$ is the number of matrices in $P^\perp$ with NRT-weight $a$:
$$
N_a = \# \{K \in P^\perp | \mu(K) = a\}.
$$
\mcomment{In the above, I used $\#$. Below, I change many $m$ to $ns$.
 Because, we treat general $P$ which may not be of cardinality
 $b^m$.
}
Thus we have
\begin{equation*}
\mbox{minNRT}(P^\perp) = \min \{a | 1 \le a \le ns, N_a \neq 0\}
\end{equation*}
and if $N_1 = \cdots = N_{ns} = 0$ we have
$\mbox{minNRT}(P^\perp) = ns+1$ (this means $P^\perp=\{0\}$).
%Theorem~\ref{th:dual-net-minwt} implies that there is always an $1 \le a \le m+1$ such that $N_a \neq 0$, hence $\{a| 1 \le a \le m+1, N_a \neq 0\} \neq \emptyset$.

We introduce the notion of dual NRT-weight. \mcomment{May have a better name.}
\begin{definition}\label{def:dual-NRT}
We shall define {\em the dual NRT-weight} $\mu^*$ on $\bbX_n$:
for $x=(\xi_1,\ldots,\xi_n) \in \bbX_n=G^n$, we define
$\mu^*(x)$ as the index of the minimum nonzero coordinate, i.e., if $x \neq 0$ we set
$$
\mu^*(x) = \min \{i| \xi_i \neq 0\}.
$$
If $x=0$, then $\mu^*(x):=0$.
\end{definition}
It is natural to define $\mu^*(X)$ for
$X=(x_1,\ldots,x_s)^\top \in \bbX_n^s$ by
\begin{equation*}
\mu^*(X):= \sum_{i=1}^s \mu^*(x_i),
\end{equation*}
but we do not use this later.
\mcomment{Art and I want to use this notion for a completely different
purpose. This might be not natural, think more???}

The following gives an algorithm to compute
the weight enumerator polynomial of $P^\perp$
by enumeration of $P$.

\begin{theorem}\label{th:MacWilliams}
Let $P$ be a subgroup of $\bbX_n^s$.
For $X \in \bbX_n^s$, let $x_i \in \bbX_n$
be its $i$-th row, namely,
$X=(x_1,\ldots,x_s)^\top \in \bbX_n^s$.
Then, the weight enumerator polynomial of $P^\perp$ is
given by
$$
\sum_{a=0}^{ns} N_a z^{a} =
\frac{1}{\#(P)} \sum_{X \in P} \prod_{i=1}^s p(\mu^*(x_i);z),
$$
where the polynomial $p(h;z)$ ($h$ being an integer between $0$ and $n$)
is defined by
$$
p(h;z)
:=
\left\{\begin{array}{rl} 1 + \sum_{a=1}^n z^a (b^a-b^{a-1})
& \mbox{if }
 h = 0,
 \\ 1 + \sum_{a=1}^{h-1} z^a (b^a-b^{a-1}) - z^hb^{h-1}
 & \mbox{if } h > 0. \end{array} \right.
$$
\mcomment{The above holds for any subgroup $P$.}
Suppose that $\#(P)=b^m$.
Then, $P$ is a strict $(t,m,s)$-net with
$$
t = m+1- \min \{a | 1 \le a \le ns, N_a \neq 0\},
$$
where for $N_1 = \cdots = N_{ns} = 0$
we set $\min \{a | 1 \le a \le ns, N_a \neq 0\} = ns+1$.
\end{theorem}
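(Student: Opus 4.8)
The plan is to prove the stated identity by finite-group Fourier analysis (Poisson summation) and then read off the net statement from Theorem~\ref{th:dual-net-minwt}. Indeed, once the polynomial identity $\sum_{a=0}^{ns} N_a z^a = \frac{1}{\#(P)}\sum_{X\in P}\prod_{i=1}^s p(\mu^*(x_i);z)$ is established, the definition of $N_a$ together with Theorem~\ref{th:dual-net-minwt} (which gives $t = m+1-\mathrm{minNRT}(P^\perp)$) yields the second assertion immediately, since $\mathrm{minNRT}(P^\perp) = \min\{a \mid 1\le a\le ns,\ N_a\neq 0\}$ with the stated convention when all $N_a$ vanish. So the whole content is the MacWilliams identity, and I would devote the proof to it.

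First I would convert the sum over $P^\perp$ into a sum over $P$. For a function $f$ on $\bbK_n^s$ set $\hat f(X) := \sum_{K\in\bbK_n^s} f(K)\,(K\bullet X)$. Swapping the order of summation, $\sum_{X\in P}\hat f(X) = \sum_{K\in\bbK_n^s} f(K)\sum_{X\in P}(K\bullet X)$, and the inner sum is the orthogonality relation of Lemma~\ref{lem:character}(\ref{orthogonality}) applied to the group $P$: the restriction $X\mapsto K\bullet X$ is a character of $P$, trivial exactly when $K\in P^\perp$, so the inner sum equals $\#(P)$ if $K\in P^\perp$ and $0$ otherwise. Hence $\sum_{K\in P^\perp} f(K) = \frac{1}{\#(P)}\sum_{X\in P}\hat f(X)$. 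Taking $f(K)=z^{\mu(K)}$ and using the product structures $\mu(K)=\sum_{i=1}^s\mu(k_i)$ and $K\bullet X=\prod_{i=1}^s (k_i\bullet x_i)$, the transform factors as $\hat f(X)=\prod_{i=1}^s g(x_i)$ with the single-coordinate sum $g(x):=\sum_{k\in\bbK_n} z^{\mu(k)}\,(k\bullet x)$. It then remains to show $g(x)=p(\mu^*(x);z)$.

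The heart of the argument is this last computation of $g(x)$, and that is where I expect the real work to lie. I would group the sum by the value $a=\mu(k)$: the term $a=0$ gives $1$, while for $1\le a\le n$ the contributing $k$ are those with $\kappa_a\neq 0$, $\kappa_{a+1}=\cdots=\kappa_n=0$, and $\kappa_1,\ldots,\kappa_{a-1}$ free. Since $k\bullet x=\prod_j \kappa_j(\xi_j)$ and the trailing trivial characters contribute $1$, this term equals $z^a\big(\prod_{j=1}^{a-1}\sum_{\kappa_j\in G^*}\kappa_j(\xi_j)\big)\big(\sum_{\kappa_a\in G^*\setminus\{0\}}\kappa_a(\xi_a)\big)$. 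Applying Lemma~\ref{lem:character}(\ref{orthogonality}) on $G$, the full inner sum $\sum_{\kappa\in G^*}\kappa(\xi)$ is $b$ if $\xi=0$ and $0$ if $\xi\neq 0$, while the punctured sum $\sum_{\kappa\neq 0}\kappa(\xi)$ is $b-1$ if $\xi=0$ and $-1$ if $\xi\neq 0$. Writing $h=\mu^*(x)$ (so $\xi_j=0$ for $j<h$ and $\xi_h\neq 0$ when $h\ge 1$), a short case analysis in $a$ finishes the proof: for $a<h$ every relevant coordinate vanishes and the term is $z^a(b^a-b^{a-1})$; at $a=h$ the punctured sum flips sign and the term becomes $-z^hb^{h-1}$; and for $a>h$ one of the full sums hits the nonzero coordinate $\xi_h$ and the whole product vanishes. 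Summing these reproduces the two branches of $p(h;z)$ exactly, with the case $x=0$ (i.e.\ $h=0$) giving the first branch since no cancellation occurs. The main obstacle is therefore not any deep fact but the bookkeeping of this cancellation—keeping straight which coordinates force the factor $b$, $b-1$, $-1$, or $0$—and verifying that it matches $p(\mu^*(x);z)$ in every case.
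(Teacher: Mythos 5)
Your proposal is correct and follows essentially the same route as the paper's proof: Poisson summation via the Fourier transform $\hat f(K)=\sum_K f(K)(K\bullet X)$ with $f(K)=z^{\mu(K)}$, factorization of $\hat f$ into per-row sums, and the coefficient-of-$z^a$ case analysis via character orthogonality showing the per-row sum equals $p(\mu^*(x_i);z)$. The final reduction of the $t$-value claim to Theorem~\ref{th:dual-net-minwt} is likewise how the paper concludes, so nothing is missing.
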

The last condition is satisfied only if $s=1$, $m=n$ and $P=\bbX_n^1$,
and then $P$ is a $(0,m,s)$-net.
\begin{proof}
For any function
$f: \bbK_n^s \to \mathbb{C}[z]$,
its Fourier transform
$\hat{f}: \bbX_n^s \to \mathbb{C}[z]$
is defined by
$$
\hat{f}(X):=\sum_{K \in \bbK_n^s} f(K)(K\bullet X),
$$
where $K\bullet X \in T \subset \bbC$.
Orthogonality of the characters stated in Lemma~\ref{lem:character}
implies that for a subgroup $P\subset \bbX_n^s$
$$
\frac{1}{\#(P)}
\sum_{X \in P}(K\bullet X)
=1 \mbox{ if $K \in P^\perp$ and }
=0 \mbox{ if $K \notin P^\perp$. }
$$
Thus, we have the so-called Poisson summation formula
$$
\frac{1}{\#(P)}
\sum_{X \in P}\hat{f}(X)
=\sum_{K \in \bbK_n^s}
\frac{1}{\#(P)}
\sum_{X \in P}
f(K)(K \bullet X)
=
\sum_{K\in P^\perp}
f(K).
$$
Now we put $f(K):=z^{\mu(K)}\in \bbC[z]$.
Then the right most end
is the weight enumerator polynomial
of $P^\perp$.
On the other hand, for $X=(\xi_{i,j})_{i,j} \in \bbX_n^s$,
$$
\hat{f}(X)=
\sum_{K \in \bbK_n^s} z^{\mu(K)} (K \bullet X)
=\prod_{i=1}^s \left(\sum_{k=(\kappa_1,\ldots,\kappa_n)\in \bbK_n}
z^{\mu((\kappa_1,\ldots,\kappa_n))}\prod_{j=1}^n \kappa_j(\xi_{i,j})\right).
$$
\mcomment{$\to$}
We compute the $i$-th component; namely,
we put $\xi_j:=\xi_{i,j}$,
and we prove in the following that the polynomial (which appeared in the above product)
$$
q((\xi_1,\ldots,\xi_n); z)
:=
\sum_{(\kappa_1,\ldots,\kappa_n)\in \bbK_n}
z^{\mu((\kappa_1,\ldots,\kappa_n))}\prod_{j=1}^n \kappa_j(\xi_j)
$$
is equal to $p(h;z)$, where $h = \mu^*(x_i)$. The coefficient of $z^a$ of $q((\xi_1,\ldots, \xi_n); z)$
is
$$
\sum_{\satop{k=(\kappa_1,\ldots,\kappa_n)\in \bbK_n}{\mu(k)=a}}
\prod_{j=1}^n\kappa_j(\xi_j)
=
\left(\sum_{\kappa \in G^*-\{0\}} \kappa(\xi_a) \right)
\prod_{j=1}^{a-1}{\sum_{\kappa \in G^*}\kappa(\xi_j)}.
$$
This is zero if there is $\xi_j \neq 0$ with $1\leq j \leq a-1$,
$(b^a-b^{a-1})$ if $\xi_1=\cdots=\xi_a=0$,
and
$-b^{a-1}$ if $\xi_1=\cdots=\xi_{a-1}=0, \xi_a \neq 0$,
because
$
 \sum_{\kappa \in G^*-\{0\}} \kappa(\xi_a)=(\sum_{\kappa \in G^*}
 \kappa(\xi_a)) -1 = b-1 \mbox{ or } -1
$, according to $\xi_a=0$ or not.
Thus, $q((\xi_1,\ldots,\xi_n); z)$ depends only
on the value $\mu^*(\xi_1, \ldots, \xi_n)$. Namely,
for $x=(\xi_1, \ldots, \xi_n)$ we have
$q(x;z)=p(\mu^*(x);z)$
where
$$
p(h;z)
:=
\left\{\begin{array}{rl} 1 + \sum_{a=1}^n z^a (b^a-b^{a-1}) & \mbox{if }
 h = 0,
 \\ 1 + \sum_{a=1}^{h-1} z^a (b^a-b^{a-1}) - z^hb^{h-1}
 & \mbox{if } h > 0. \end{array} \right.
$$
Now for $X=(x_1,\ldots,x_s)^\top \in \bbX_n^s$,
$$\hat{f}(X)=\prod_{i=1}^s p(\mu^*(x_i);z)$$
and hence
$$
\sum_{a=0}^{ns} N_a z^{a}
=
\frac{1}{\#(P)}
\sum_{X=(x_1,\ldots,x_s)^\top \in P}\prod_{i=1}^s p(\mu^*(x_i);z).
$$
%where $X=(x_1,\ldots,x_s)^\top$.
\end{proof}

%\begin{remark}
%???Makoto notes the most general version,
%which implies both ``Corollary 1'' you had
%in a previous version, and Trinker's version.
%I wrote from this remark to the next remark.
%2012/7/9.
%\end{remark}
In the above proof, we showed: \jcomment{$\leftarrow$ replaced 'proved' with 'showed'}
\begin{lemma}
The Fourier \jcomment{$\leftarrow$ replaced 'fourier' with 'Fourier'} transform of $\bbK_n \to \bbC[z], k \mapsto z^{\mu(k)}$
is
$$
\widehat{z^{\mu(-)}}(x)=p(\mu^*(x),z).
$$
\end{lemma}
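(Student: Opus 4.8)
The plan is to recognize this lemma as precisely the single-coordinate computation already carried out inside the proof of Theorem~\ref{th:MacWilliams}, specialized to $s=1$, so that the row-wise factor there becomes the entire Fourier transform on $\bbK_n$. Concretely, for $f:\bbK_n\to\bbC[z]$ given by $f(k)=z^{\mu(k)}$, the defining formula for the Fourier transform reads
$$
\widehat{z^{\mu(-)}}(x)=\sum_{k=(\kappa_1,\ldots,\kappa_n)\in\bbK_n} z^{\mu(k)}\prod_{j=1}^n\kappa_j(\xi_j)
$$
for $x=(\xi_1,\ldots,\xi_n)\in\bbX_n$, and the goal is to identify this polynomial with $p(\mu^*(x);z)$.

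First I would extract the coefficient of $z^a$ for each $1\le a\le n$ (and separately the constant term $a=0$). The condition $\mu(k)=a$ forces $\kappa_a\neq 0$ and $\kappa_j=0$ for all $j>a$, while $\kappa_1,\ldots,\kappa_{a-1}$ range freely over $G^*$. Since $\kappa_j=0$ yields the trivial character value $\kappa_j(\xi_j)=1$, the product factorizes and the coefficient of $z^a$ becomes
$$
\Big(\sum_{\kappa\in G^*\setminus\{0\}}\kappa(\xi_a)\Big)\prod_{j=1}^{a-1}\Big(\sum_{\kappa\in G^*}\kappa(\xi_j)\Big).
$$

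Second I would evaluate each factor using the character orthogonality of Lemma~\ref{lem:character}\,(\ref{orthogonality}): $\sum_{\kappa\in G^*}\kappa(\xi)$ equals $b$ if $\xi=0$ and $0$ otherwise, from which $\sum_{\kappa\in G^*\setminus\{0\}}\kappa(\xi)$ equals $b-1$ if $\xi=0$ and $-1$ otherwise, after subtracting the trivial-character term $1$. A short case analysis then shows the $z^a$-coefficient is $0$ whenever some $\xi_j\neq 0$ with $j\le a-1$, equals $b^a-b^{a-1}$ when $\xi_1=\cdots=\xi_a=0$, and equals $-b^{a-1}$ when $\xi_1=\cdots=\xi_{a-1}=0$ but $\xi_a\neq 0$.

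Finally I would translate these three cases into the two-branch definition of $p(h;z)$ through $h=\mu^*(x)$. If $h=0$, every $\xi_j$ vanishes, so the $z^a$-coefficient is $b^a-b^{a-1}$ for all $a\ge 1$ while the constant term (coming from $k=0$) is $1$; if $h>0$, then for $a<h$ we are in the all-zero case, at $a=h$ in the single-nonzero case, and for $a>h$ the coefficient vanishes, matching $p(h;z)$ exactly. I expect the only point requiring care to be the bookkeeping of which coordinates of $k$ are forced to zero, which are pinned nonzero, and which remain free once $\mu(k)=a$ is imposed; once that is tracked correctly the two orthogonality identities do all the work, and no genuine difficulty remains, since the statement is a verbatim repackaging of the row computation in Theorem~\ref{th:MacWilliams}.
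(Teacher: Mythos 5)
Your proof is correct and follows essentially the same route as the paper: the paper explicitly derives this lemma as the single-row computation inside the proof of Theorem~\ref{th:MacWilliams}, extracting the coefficient of $z^a$, applying the character orthogonality of Lemma~\ref{lem:character} to each factor, and running the same three-case analysis keyed to $\mu^*(x)$. No discrepancy to report.
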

Indeed, we have a generalized version of Theorem~\ref{th:MacWilliams},
as follows.
Define a function $F$ from $\bbK_n^s$
to the polynomial ring of $sn$ variables:
$$
F:\bbK_n^s \to \bbC[z_{i,j} | 1\leq i \leq s, 1\leq j \leq n],
(k_1,\ldots,k_s) \mapsto \prod_{i=1}^s z_{i,\mu(k_i)}.
$$
A {\em generalized weight enumerator polynomial} for $P^\perp$
is defined by
$$
GW_{P^\perp}(z_{i,j}):=\sum_{K \in P^\perp} F(K).
$$

The Poisson summation formula then yields \jcomment{$\leftarrow$ replaced 'tells' with 'then yields'}
$$
GW_{P^\perp}(z_{i,j})=\frac{1}{\#(P)} \sum_{X \in P} \hat{F}(X).
$$
Here, by definition,
$$
\hat{F}(X)
=\sum_{K=(k_1,\ldots,k_s)^\top\in \bbK_n^s}
z_{1,\mu(k_1)}\cdots z_{s, \mu(k_s)}(K\bullet X).
$$
This equals
$$
\prod_{i=1}^s
(\sum_{k_i \in \bbK_n}
z_{i,\mu(k_i)}(k_i \bullet x_i)).
$$

Now, we define an $(n+1)$-variable polynomial
$\mathbf{p}(h; z_0,\ldots,z_n)$
by the summation  \jcomment{in the equation below: left hand side does not have $x$, so right hand side cannot have $x$ as well}
$$
\mathbf{p}(h; z_0,\ldots,z_n):= \sum_{a=0}^n z_a T_a(h),
$$
where $T_a(h)$ are integers defined as the coefficients of $z^a$
in the polynomial $p(h; z)$ in Theorem~\ref{th:MacWilliams}.
Then we have the equality
$$
\sum_{k \in \bbK_n}
z_{\mu(k)}(k \bullet x)
=
\mathbf{p}(\mu^*(x); z_0,\ldots,z_n).
$$
(The same proof as for Theorem~\ref{th:MacWilliams} works for proving this equality.
In fact,
$
p(h;z)=\mathbf{p}(h;z^0,z^1,\ldots, z^n)
$
holds.)
Thus, we proved
\begin{theorem} With the notation above we have \jcomment{$\leftarrow$}
$$
GW_{P^\perp}[z_{i,j}]:=
\sum_{K=(k_1,\ldots,k_s)^\top \in P^\perp}
z_{1,\mu(k_1)}\cdots z_{s, \mu(k_s)}
=
\frac{1}{\#(P)}
\sum_{X\in P}
\prod_{i=1}^s
\mathbf{p}(\mu^*(x_i); z_{i,0},\ldots,z_{i,n}).
$$
\end{theorem}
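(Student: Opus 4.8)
The plan is to follow the same three-step structure used in the proof of Theorem~\ref{th:MacWilliams} — Poisson summation, factorization of the Fourier transform over the $s$ rows, and a per-row orthogonality computation — the only new feature being that the single variable $z$ is replaced by the family $\{z_{i,j}\}$, and correspondingly the monomial $z^a$ in the original argument is replaced by the distinct variable $z_{i,a}$.

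First I would apply the Poisson summation formula to the function $F$ rather than to $f(K)=z^{\mu(K)}$. The derivation of Poisson summation in the proof of Theorem~\ref{th:MacWilliams} used only the orthogonality of characters (Lemma~\ref{lem:character}, item~\ref{orthogonality}) together with the fact that $\frac{1}{\#(P)}\sum_{X\in P}(K\bullet X)$ equals $1$ for $K\in P^\perp$ and $0$ otherwise; neither step referred to the target algebra, so the formula holds verbatim for any map from $\bbK_n^s$ into a commutative $\bbC$-algebra, in particular into $\bbC[z_{i,j}]$. This gives $GW_{P^\perp}(z_{i,j})=\frac{1}{\#(P)}\sum_{X\in P}\hat{F}(X)$ with no extra work.

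Next I would factorize $\hat{F}(X)$ over the row index. Since both the pairing $K\bullet X=\prod_{i=1}^s(k_i\bullet x_i)$ and the defining monomial $F(K)=\prod_{i=1}^s z_{i,\mu(k_i)}$ split as products over $i$, the sum over $K=(k_1,\ldots,k_s)^\top$ separates into independent sums over each $k_i\in\bbK_n$, yielding $\hat{F}(X)=\prod_{i=1}^s\left(\sum_{k_i\in\bbK_n} z_{i,\mu(k_i)}(k_i\bullet x_i)\right)$. Finally I would evaluate each single-row sum by grouping terms according to the value $a=\mu(k)$: the coefficient of $z_{i,a}$ is exactly $\sum_{k:\mu(k)=a}\prod_j \kappa_j(\xi_{i,j})$, which is precisely the quantity already computed in Theorem~\ref{th:MacWilliams} and shown there, via orthogonality, to depend only on $h=\mu^*(x_i)$ and to equal the integer $T_a(h)$. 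Hence the single-row sum is $\sum_{a=0}^n z_{i,a}T_a(h)=\mathbf{p}(\mu^*(x_i);z_{i,0},\ldots,z_{i,n})$, and substituting back completes the identity.

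The main obstacle is conceptual rather than computational: one must recognize that the per-row orthogonality computation in Theorem~\ref{th:MacWilliams} never actually used that the coefficients $T_a(h)$ were attached to powers $z^a$ of one variable — only that they were attached to symbols indexed by $a$. Once this is made explicit, replacing $z^a$ by the free variable $z_{i,a}$ changes nothing in the derivation, and the specialization $z_{i,a}=z^a$ recovers the stated relation $\mathbf{p}(h;z^0,\ldots,z^n)=p(h;z)$, so the present theorem genuinely generalizes the earlier one.
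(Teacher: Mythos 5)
Your proposal is correct and follows essentially the same route as the paper: the paper likewise applies the Poisson summation formula to $F$, factorizes $\hat{F}(X)$ as a product of single-row sums $\sum_{k_i\in\bbK_n} z_{i,\mu(k_i)}(k_i\bullet x_i)$, and observes that the coefficient of $z_{i,a}$ is the same character sum $T_a(h)$ already computed in the proof of Theorem~\ref{th:MacWilliams}, so that each row contributes $\mathbf{p}(\mu^*(x_i);z_{i,0},\ldots,z_{i,n})$. Your explicit remark that the earlier per-row computation only used the index $a$, never the monomial structure $z^a$, is exactly the point the paper makes implicitly when it says the same proof works.
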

If we specialize the variables by putting $z_{i,a}:=z^a$ in
$GW_{P^\perp}(z_{i,j})$,
then we have Theorem~\ref{th:MacWilliams}.
If we specialize $z_{i,a}:=z_a$, then we have
Trinker's version of the MacWilliams identity
\cite{T08}.

The following corollary shows that,
to compute $t$-values of all the projections of $P$,
it suffices to compute a specialization
$\overline{GW}_{P^\perp}(z_1,\ldots,z_s)$
of $GW_{P^\perp}(z_{i,j})$
obtained by the substitution $z_{i,j} \leftarrow z_i^j$. That is,
\begin{align}\label{eq_overlineGW}
\overline{GW}_{P^\perp}(z_1,\ldots, z_s) := &
\sum_{K=(k_1,\ldots,k_s)^\top \in P^\perp}
z_{1}^{\mu(k_1)}\cdots z_{s}^{\mu(k_s)} \\
= &
\frac{1}{\#(P)}
\sum_{X\in P}
\prod_{i=1}^s
\mathbf{p}(\mu^*(x_i); z_{i}^{0}, z_{i}^1, \ldots,z_{i}^{n}). \nonumber
\end{align}

\begin{corollary}
Let $P\subset \bbX_n^s$. Take $u \subseteq \{1,\ldots, s\}$, $u \neq \emptyset$.
We may consider the projection $P_u$ of $P$
to the coordinates in $u$, which is the
image of $P$ by the projection $\bbX_n^s \to \bbX_n^u$
(here $\bbX_n^u$ is the set of mappings from $u$ to $\bbX_n$).
Then, the weight enumerator polynomial of
$P_u$ is obtained from $GW_{P^\perp}[z_{i,j}]$
by substituting $z_{i,j} \leftarrow z^j$ for $i \in u$,
$z_{i,0} \leftarrow 1$ for $i \notin u$,
and $z_{i,j} \leftarrow 0$ for $i \notin u$ and $1\leq j\leq n$.
Or equivalently, by substituting
$z_i \leftarrow z$ for $i \in u$
and
$z_i \leftarrow 0$ for $i \notin u$
in $\overline{GW}_{P^\perp}(z_1,\ldots,z_s)$ defined in \eqref{eq_overlineGW}.
\end{corollary}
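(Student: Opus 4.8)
The plan is to reduce the corollary to one structural fact about how dualization interacts with coordinate projection, after which both substitution rules are pure bookkeeping. Recall that in the paper's convention ``the weight enumerator polynomial of $P_u$'' means $\mathrm{WP}_{(P_u)^\perp}(z)=\sum_{K_u\in (P_u)^\perp} z^{\mu(K_u)}$, the enumerator of the dual net $(P_u)^\perp\subset \bbK_n^u$. The central claim I would establish first is that the zero-extension map $\bbK_n^u\hookrightarrow \bbK_n^s$ (insert $0$ in the components indexed by $i\notin u$) carries $(P_u)^\perp$ bijectively, and weight-preservingly, onto the \emph{slice}
$S:=\{K=(k_1,\ldots,k_s)^\top\in P^\perp : k_i=0 \text{ for all } i\notin u\}$ of $P^\perp$.

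To prove this I would use that $\pr_u:\bbX_n^s\to \bbX_n^u$ is a surjective homomorphism restricting to a surjection $P\to P_u$, together with the factorization of the pairing. Concretely, for $K_u\in \bbK_n^u$ with zero-extension $\tilde K_u\in \bbK_n^s$ one has $K_u\bullet \pr_u(X)=\tilde K_u\bullet X$ for every $X\in P$, since the omitted components contribute trivially. Hence $K_u\in (P_u)^\perp$ iff $\tilde K_u\bullet X=1$ for all $X\in P$, i.e.\ iff $\tilde K_u\in P^\perp$; by construction $\tilde K_u\in S$, and every element of $S$ arises this way. (This is just the dual of the surjectivity $P\to P_u$, via Lemma~\ref{lem:character}, items~\ref{surjectivity} and~\ref{product}, the latter giving the decomposition $\bbK_n^s=\prod_i (\bbX_n)^*$ that makes ``zero-extension'' meaningful.) Weight preservation is immediate from $\mu(0)=0$: the extended matrix has $\mu(\tilde K_u)=\sum_{i\in u}\mu(k_i)=\mu(K_u)$.

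With this in hand I would evaluate the stated substitution directly on $GW_{P^\perp}[z_{i,j}]=\sum_{K\in P^\perp}\prod_{i=1}^s z_{i,\mu(k_i)}$. Fix a term indexed by $K$. If some $i\notin u$ has $k_i\neq 0$, then $\mu(k_i)\ge 1$ and the rule $z_{i,\mu(k_i)}\leftarrow 0$ annihilates that term. If instead $k_i=0$ for every $i\notin u$ (so $K\in S$), the factors $z_{i,0}\leftarrow 1$ for $i\notin u$ disappear while the factors $z_{i,\mu(k_i)}\leftarrow z^{\mu(k_i)}$ for $i\in u$ multiply to $z^{\sum_{i\in u}\mu(k_i)}=z^{\mu(K)}$. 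Thus only the slice $S$ survives, each surviving term contributing $z^{\mu(K)}$, and by the first step the total equals $\sum_{K_u\in (P_u)^\perp} z^{\mu(K_u)}=\mathrm{WP}_{(P_u)^\perp}(z)$, as claimed.

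It then remains to check that the second description agrees with the first. Passing to $\overline{GW}_{P^\perp}$ is the specialization $z_{i,j}\leftarrow z_i^j$, so setting $z_i\leftarrow z$ for $i\in u$ reproduces $z_{i,j}\leftarrow z^j$, and setting $z_i\leftarrow 0$ for $i\notin u$ yields $z_i^0=0^0=1$ and $z_i^j=0$ for $j\ge 1$ under the convention $0^0=1$ — exactly the rules $z_{i,0}\leftarrow 1$ and $z_{i,j}\leftarrow 0$ ($j\ge 1$). The main obstacle, and the only genuinely conceptual point, is the first step: one must pin down the direction of the duality so that projecting the primal net $P$ corresponds to the \emph{zero-supported slice} of the dual $P^\perp$ (rather than to a quotient of it); once that correspondence is correctly oriented, everything reduces to the term-by-term evaluation above.
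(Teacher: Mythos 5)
Your proposal is correct and follows essentially the same route as the paper: both arguments reduce the corollary to the identification of $P_u^\perp$ with the zero-supported slice $P^\perp\cap\bbK_n^u$ of $P^\perp$, and then carry out the same term-by-term substitution bookkeeping. The only (minor) difference is that you prove this key identification by a direct computation with the pairing, $K_u\bullet\pr_u(X)=\tilde K_u\bullet X$, whereas the paper obtains it by diagram chasing in a commutative diagram of two short exact sequences $0\to P^\perp\to\bbK_n^s\to P^*\to 0$ and $0\to P_u^\perp\to\bbK_n^u\to P_u^*\to 0$; your version is a bit more elementary and self-contained, but the content is the same.
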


\begin{proof}
We have a commutative diagram
of two short exact sequences of abelian groups:
$$
\begin{array}{ccccccccc}
0 & \to & P^\perp
  & \to & \bbK_n^s
  & \to & P^*
  & \to & 0 \\
  &     & \uparrow
  &     & \uparrow
  &     & \uparrow
  &     & \\
0 & \to & P_u^\perp
  & \to & \bbK_n^u
  & \to & P_u^*
  & \to & 0. \\
\end{array}
$$
The middle vertical arrow
$\bbK_n^u \to \bbK_n^s$
is obtained by supplementing $0$
on every $i$-th row for $i\notin u$.
The middle and the right vertical arrows are injective.
Hence, by diagram chasing, we have
$P_u^\perp = P^\perp \cap \bbK_n^u$.
%Thus,
%if we neglect the multiplicity of points
%of $P_u$,
%(namely, different points in $P$ mapped to
%on same point in $P_u$ is counted as one point),
%then
By definition of $GW_{P_u^\perp}$
we have
$$
GW_{P_u^\perp}(z_{i,j})=
\sum_{K_u \in P_u^\perp}\prod_{i\in u}z_{i,\mu(k_i)}.
$$
By $P_u^\perp = P^\perp \cap \bbK_n^u$,
the above summation is over $K=(k_1,\ldots,k_s)^\top \in P^\perp$
satisfying $k_i=0$ for every $i\notin u$.
Compared with the definition  $GW_{P^\perp}[z_{i,j}]:=
\sum_{K=(k_1,\ldots,k_s)^\top \in P^\perp}
z_{1,\mu(k_1)}\cdots z_{s, \mu(k_s)}$,
we notice that $GW_{P^\perp}[z_{i,j}]$ with substitutions
\begin{equation*}
z_{i,j} \leftarrow \left\{\begin{array}{rl} 1 & \mbox{for } i \notin u, j = 0, \\ 0 & \mbox{for } i \notin u, 1 \le j \le n, \end{array} \right.
\end{equation*}
%$z_{i,0} \leftarrow 1$ and $z_{i,j} \leftarrow 0$
%for every $1\leq j\leq n$ and every $i \notin u$
gives the $GW_{P_u^\perp}(z_{i,j})$.
Then, the substitution $z_{i,j} \leftarrow z^j$ for $i \notin u$
gives the weight enumerator polynomial of
$P_u^\perp$.
\end{proof}

From the polynomial $\overline{GW}_{P^\perp}(z_1,\ldots,z_s)$,
one can read which projection has large $t$-values.
More precisely, given an $s'$ with $1\leq s' \leq s$, we can compute
the (worst) choice of the $s'$ coordinates to which the projection
of $P$ have the largest exact $t$-value.
%Sort the monomials in the polynomial according to
%the ascending order with respect to the total degree.

For a monomial $m$ in $\bbC[z_1,\ldots,z_s]$, we
define its {\em support} $\subset\{1,2,\ldots,s\}$
as the set of indices
of $z_i$ appearing \jcomment{$\leftarrow$ 'appearing' instead of 'appeared'} in $m$ (without counting the multiplicity).

\begin{corollary}
Let $\overline{GW}_{P^\perp}(z_1,\ldots,z_s)$ be as above.
Let $H_d$ be the (homogeneous) degree $d$ part
with respect to the total degree.
Suppose that $H_d\neq 0$. Take any monomial $m_d$ in $H_d$
such that the cardinality $c_d$ of its support is minimum
among the monomials in $H_d$. For $H_d=0$, we define $c_d=s+1$
(indeed, we may define $c_d$ as any number exceeding $s$).

For a given $s'$ with $1\leq s' \leq s$, define $d'$
as the minimum $d \geq 1$ satisfying  $c_d\leq s'$.
Then, the projection to the support of $m_{d'}$
has the exact $t$-value
$t':=m+1-d'$. This is the largest exact $t$-value
among all the projections $P_u$ with $\#(u)\leq s'$,
$\emptyset \neq u\subset \{1,\ldots,s\}$.
\end{corollary}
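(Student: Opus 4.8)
The plan is to convert both claims — the value $t'=m+1-d'$ and its maximality — into statements about the minimum NRT-weights of the dual nets $P_u^\perp$, and then read these off from the monomial structure of $\overline{GW}_{P^\perp}$. First I would record the dictionary supplied by the preceding corollary and by \eqref{eq_overlineGW}: for nonempty $u\subseteq\{1,\ldots,s\}$ one has $P_u^\perp=P^\perp\cap\bbK_n^u$, and after collecting like terms the monomials of $\overline{GW}_{P^\perp}(z_1,\ldots,z_s)$ correspond to the distinct exponent patterns $(\mu(k_1),\ldots,\mu(k_s))$ arising from $K=(k_1,\ldots,k_s)^\top\in P^\perp$. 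Two observations make this usable: the total degree of the monomial attached to $K$ is exactly the NRT-weight $\mu(K)=\sum_i\mu(k_i)$, and its support is exactly $\{i:k_i\neq0\}$. Moreover, since $\overline{GW}_{P^\perp}$ has nonnegative integer coefficients there is no cancellation, so a monomial of total degree $d$ with support contained in $u$ occurs in $H_d$ if and only if some nonzero $K\in P^\perp$ with $\mu(K)=d$ and $\{i:k_i\neq0\}\subseteq u$ exists, that is, if and only if $P_u^\perp$ contains an element of weight $d$.

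Next I would translate the $t$-values. Applying the duality of Theorem~\ref{th:dual-net-minwt} to each projection — read with the convention that $P_u$ retains the original parameter $m$, so that the relevant uniformity is that of the homomorphism $P\to\bbX_n^s(Y)$ out of the full group $P$ of cardinality $b^m$, for $Y$ supported on the coordinates of $u$ — gives that the exact $t$-value of $P_u$ equals $t(u)=m+1-\mathrm{minNRT}(P_u^\perp)$. By the dictionary above, $\mathrm{minNRT}(P_u^\perp)$ is precisely the least $d\geq1$ for which $H_d$ contains a monomial whose support lies in $u$. Hence maximizing $t(u)$ over all nonempty $u$ with $\#(u)\leq s'$ is the same as minimizing $\mathrm{minNRT}(P_u^\perp)$ over such $u$, i.e. computing
\[
\min\bigl\{\mu(K)\;:\;K\in P^\perp,\ K\neq0,\ \#\{i:k_i\neq0\}\leq s'\bigr\}.
\]

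I would then identify this last quantity with $d'$. Ranging $u$ over subsets of size $\leq s'$ and intersecting with $P^\perp$ is equivalent to ranging over nonzero $K\in P^\perp$ whose support has size $\leq s'$: given such a $K$ one takes $u=\{i:k_i\neq0\}$, and conversely any contributing $K$ has support inside its $u$. By the support/degree identity, a nonzero $K$ with $\mu(K)=d$ and $\#(\mathrm{supp}(K))\leq s'$ exists exactly when $H_d$ carries a monomial of support size $\leq s'$, i.e. when $c_d\leq s'$; the least such $d\geq1$ is $d'$ by definition. This gives $\min_{\#(u)\leq s'}\mathrm{minNRT}(P_u^\perp)=d'$ and therefore $\max_{\#(u)\leq s'}t(u)=m+1-d'=t'$. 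To exhibit a witnessing projection I would take $u=\mathrm{supp}(m_{d'})$, which has $\#(u)=c_{d'}\leq s'$: there is then a $K\in P^\perp$ with $\mu(K)=d'$ and support $u$, so $\mathrm{minNRT}(P_u^\perp)\leq d'$, while a strict inequality would produce some $d''<d'$ with $c_{d''}\leq\#(u)\leq s'$, contradicting the minimality of $d'$. Hence $\mathrm{minNRT}(P_u^\perp)=d'$ and $P_u$ attains $t=m+1-d'$.

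The main obstacle I expect is bookkeeping rather than algebra, at two points. First, the per-monomial notions of total degree and support must be well defined after collecting like terms, which relies on the nonnegativity of the coefficients of $\overline{GW}_{P^\perp}$ (hence the absence of cancellation). Second, one must be careful about the convention under which the projected point set $P_u$ keeps the parameter $m$, so that Theorem~\ref{th:dual-net-minwt} delivers $t(u)=m+1-\mathrm{minNRT}(P_u^\perp)$ rather than a value tied to the possibly smaller cardinality $\#(P_u)$; this is what forces the uniformity test to be run on the full group $P$. Once these are settled, the optimization is only a short two-sided comparison between the admissible subsets $u$ and the small-support elements of $P^\perp$.
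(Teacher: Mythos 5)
Your proposal is correct and follows essentially the same route as the paper: both reduce the claim to Theorem~\ref{th:dual-net-minwt} applied to each projection $P_u$ together with the identification $P_u^\perp=P^\perp\cap\bbK_n^u$ from the previous corollary, and both use the nonnegativity of the coefficients of $\overline{GW}_{P^\perp}$ to conclude that a monomial of degree $d$ with support in $u$ survives exactly when $P_u^\perp$ contains a nonzero element of NRT-weight $d$. The only cosmetic difference is that you read this off directly from the monomial/dual-vector dictionary, whereas the paper phrases it via the substitution $z_i\leftarrow z$ for $i\in u$ and $z_i\leftarrow 0$ for $i\notin u$; your explicit handling of the convention that $P_u$ retains the parameter $m$ is a point the paper leaves implicit.
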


\begin{proof}
For a given nonempty $u\subset \{1,\ldots,s\}$,
we consider the projection $P_u$.
Suppose that $P_u^\perp \neq \{0\}$.
Let $a_0$ be the minimum degree of
nonconstant monomials
in the weight enumerator polynomial of $P_u^\perp$.
Then, the exact $t$-value of $P_u$ is
$m+1-a_0$ by Theorem~\ref{th:dual-net-minwt}.

The previous corollary implies that
the degree $d$ part of the weight enumerator polynomial of
$P_u^\perp$ is obtained from $H_d$
by substituting $z_i \leftarrow z$ for $i\in u$
and $z_i \leftarrow 0$ for $i \notin u$.
Thus, $a_0$ is the minimum positive integer $d$
such that $H_d$ does not vanish by this substitution.
Because the coefficients of $H_d$ are nonnegative
(being a part of weight enumerator polynomial),
the non-vanishing property is unchanged if we substitute
$z_i \leftarrow z_i$ for $i\in u$ and
$z_i \leftarrow 0$ for $i\notin u$.
For a given $H_d$, the $u$ with the minimal cardinality
satisfying this non-vanishing property is
given as the support of $m_d$; because if $m$ is a monomial appearing in
$H_d$, then taking $u$ as the support of $m$
we have the non-vanishing property. Conversely,
if we have the non-vanishing property, then there
is a monomial $m$ in $H_d$ whose support is contained in $u$.
Consequently, for a given $s'$ and for all $u$ with
cardinality $\leq s'$, the minimum $d$ such that
$H_d$ has the non-vanishing property with respect to $u$
is given as the minimum $d$ such that $c_d\leq s'$,
and by choosing $u$ to be the support of $m_d$.
\end{proof}

\subsection{On computing the $t$-value for general nets}

In Theorem~\ref{th:MacWilliams} we showed a MacWilliams type identity for digital nets over finite abelian groups. We now investigate this result when $P$ is not a subgroup of $\bbX_n^s$, but an arbitrary subset of $\bbX_n^s$ of size $b^m$.

For point sets $\mathcal{P} = \{\boldsymbol{x}_0,\ldots, \boldsymbol{x}_{b^m-1}\} \subset [0,1]^s$ it is known from Hellekalek~\cite{H94} that $\mathcal{P}$ is a strict $(t,m,s)$-net in base $b$ where
\begin{align*}
t = m  - \max\Big\{ & z\ge 0: \forall \boldsymbol{k} \in \{0,\ldots, b^{m}-1\}^s \mbox{ with } \mu(\boldsymbol{k}) \le z: \\ & \frac{1}{b^m} \sum_{l=0}^{b^m-1} {}_G\mathrm{wal}_{\boldsymbol{k}}(\boldsymbol{x}_l) = \int_{[0,1]^s} {}_G\mathrm{wal}_{\boldsymbol{k}}(\boldsymbol{x}_l) \, \mathrm{d} \boldsymbol{x} \Big\}.
\end{align*}
This means that strength of the net $m-t$ equals the Walsh degree of
exactness of the QMC rule based on the point set $\mathcal{P}$.
By the explanation after Definition~\ref{def:combinatorial-net},
we may fix any $n\geq m$
and let $P\subset \bbX_n^s$ be the approximation of $\mathcal{P}$
which is a combinatorial $(t,m,s)$-net.
Then, the above formula is equivalent to
\begin{align}
t = m  - \max\Big\{ & z\ge 0: \forall K \in \bbK_n^s
\mbox{ with } \mu(K) \le z: \nonumber \\
 &
 \frac{1}{\#(P)}
 \sum_{X \in P} K \bullet X
 =
 \frac{1}{\#(\bbX_n^s)} \sum_{X \in \bbX_n^s} K\bullet X
 \Big\}.
\label{eq:Helek}
\end{align}
\mcomment{???Do you know how to have only one number???} \jcomment{J: fixed}
Note that the last term is $1$ for $K=0$ and $0$ for $K \neq 0$.

\mcomment{In the following, I changed $\bbN_0$ into $\bbK$ etc.}
%The NRT-weight $\mu$ defined in (\ref{eq:NRT}) and
%the dual weight $\mu^*$ (Definition~\ref{def:dual-NRT})
%can also be defined on
%
%$\mathbb{N}_0$ and
%$[0,1)$ (by $n \to \infty$).
%For $k \in \mathbb{N}$ with $b$-adic expansion $k = \kappa_0 +
%b \kappa_1 + \cdots + \kappa_{a-1} b^{a-1}$, with $\kappa_{a-1} \neq 0$,
%let $\mu(k) = a$. For $k=0$ let $\mu(0) = 0$. For vectors
%$\boldsymbol{k} = (k_1,\ldots, k_s) \in \mathbb{N}_0^s$ let
%$\mu(\boldsymbol{k}) = \mu(k_1) + \cdots + \mu(k_s)$. For $x \in (0,1)$
%with $b$-adic expansion $x = \xi_1 b^{-1} + \xi_2 b^{-2} + \cdots$ let
%$\mu^*(x) = \min\{i| \xi_i \neq 0\}$ and for
%$x=0$ we set $\mu^*(0) = 0$.

%Let $\boldsymbol{x}_n = (x_{n,1},\ldots, x_{n,s})$ for $0 \le n <
%b^m$.
For a general point set $P$ one can still compute the polynomial
%\begin{equation*}
%\frac{1}{b^m} \sum_{n=0}^{b^m-1} \prod_{i=1}^s p(\mu^*(x_{n,i});z) =  \sum_{a=0}^{ms} z^a \widehat{N}_a,
%\end{equation*}
\begin{equation*}
\frac{1}{\#(P)}
\sum_{X=(x_1,\ldots,x_s)^\top \in P} \prod_{i=1}^s p(\mu^*(x_i);z)
=  \sum_{a=0}^{ms} \widehat{N}_a z^a,
\end{equation*}
where
\begin{equation*}
\widehat{N}_a = \sum_{K \in \bbK_n^s, \mu(K)=a}
\frac{1}{\#(P)} \sum_{X \in P}
K \bullet X.
\end{equation*}
\mcomment{??? I don't see an easy reason why this equality holds; I need:}\jcomment{changed the equations below}
To see this note that
\begin{align*}
\frac{1}{\#(P)}
\sum_{X=(x_1,\ldots,x_s)^\top \in P} \prod_{i=1}^s p(\mu^*(x_i);z)
= &
\frac{1}{\#(P)}
\sum_{X=(x_1,\ldots,x_s)^\top \in P} \widehat{z^{\mu(-)}}(X) \\
= &
\frac{1}{\#(P)}
\sum_{X=(x_1,\ldots,x_s)^\top \in P}
\sum_{K \in \bbK_n^s} z^{\mu(K)}K\bullet X \\ = & \sum_{a=0}^{ms} z^a  \sum_{K \in \bbK_n^s, \mu(K) = a} \frac{1}{\#(P)}
\sum_{X=(x_1,\ldots,x_s)^\top \in P}
  K\bullet X.
%= &  \sum_{a=0}^{ms} \widehat{N}_a z^a
\end{align*}
For digital nets we have $N_a = \widehat{N}_a$ since the sum
$\frac{1}{\#(P)} \sum_{X \in P}K \bullet X$ takes on only the
values $1$ if $K \in P^\perp$ or $0$ otherwise. For general point sets this does not hold anymore.

On the other hand, for general $(t,m,s)$-nets,
$\widehat{N}_{a^\ast} \neq 0$ for some $a^\ast \ge 1$
implies that there is a $K \in \bbK_n^s$
with $\mu(K) = a^\ast$ such that
$\frac{1}{\#(P)} \sum_{X \in P}
K \bullet X \neq 0$.
Thus it follows from (\ref{eq:Helek}) that in this case
\begin{equation}\label{eq_gen_points}
t \ge m +1  - \min \{a | 1 \le a \le ns, \widehat{N}_a \neq 0\},
\end{equation}
where the minimum is defined as $ns+1$ if
$\widehat{N}_1 = \cdots = \widehat{N}_{ns} = 0$.
\mcomment{This may occur only when ???} This may only occur if $s=1$, $m = n$ and $P = \bbX_n^1$, in which case $P$ is a digital $(0,m,1)$-net.

Thus the method from the previous section can still be used to obtain a lower bound on the $t$-value for $(t,m,s)$-nets in base $b$.

\mcomment{???I need to take care of the below, it is straight forward???}
Note that for general point sets  equality in \eqref{eq_gen_points} may not hold. To see this, note that
\begin{equation*}
\frac{1}{b^m} \sum_{l=0}^{b^m-1} \prod_{i=1}^s p(\mu^*(x_{l,i});z),
\end{equation*}
only depends on $\boldsymbol{x}_l$ through the values $\mu^*(x_{l,i})$. Thus we can construct an example where the inequality is strict in the following way. Take a strict digital $(t,m,s)$-net in base $b$ with $t \le m-s-1$ \cite{DP10, niexi, SchSch}. Then shift all points in the elementary interval $\prod_{i=1}^s [1-b^{-1}, 1)$ to the point $(1-b^{-1},\ldots, 1-b^{-1})$, leaving the remaining points unchanged. This new point set has a $t$-value $t \ge m-s$, since the interval $[1-b^{-2}, 1) \times \prod_{i=2}^s [1-b^{-1}, 1)$ is empty. But the polynomial
\begin{equation*}
\frac{1}{b^m} \sum_{l=0}^{b^m-1} \prod_{i=1}^s p(\mu^*(x_{l,i});z)
\end{equation*}
stays unchanged, since the values of $\mu^*(x_{l,i})$ are the same for both point sets. Thus, the quality parameter $t'$ for the new point set satisfies
\begin{equation*}
t' \ge m-s > m-s-1 \ge t = m  - \min \{a | 1 \le a \le m, \widehat{N}_a \neq 0\}.
\end{equation*}

\subsection{The first algorithm for computing the $t$-value and weight  enumerator polynomial}\label{subsec_comp}

Theorem~\ref{th:MacWilliams} yields the following algorithm for
computing the exact $t$-value of a digital net (or finding a lower bound
on $t$ for general point sets). The algorithm works for digital nets
over finite abelian groups, and consequently for finite rings.
\mcomment{Any finite rings will do, no need to have Frobenius.}
%fields or finite Frobenius rings (see
%Section~\ref{sec:ring-dual}),
%which includes $\mathbb{Z}_b$ for instance.

\mcommentred{I am going to avoid
$\{\boldsymbol{x}_0,\ldots,
\boldsymbol{x}_{b^m-1}\}$ by using $P$ below.
This is direct except for 2 of Remark~3.
Even there, it is possible to replace with
$P_{m_1}\subset P_{m_2}\subset \cdots$.
Makoto would like to have Josef's opinion:
is it worth leaving $\{\boldsymbol{x}_0,\ldots,
\boldsymbol{x}_{b^m-1}\}$? At first I thought
that it might be helpful for the users
who wants to know only the algorithm.
If you think so, too, then I will let it as it is,
and add some explanation for interpreting $P$ to
a finite SEQUENCE (not a point set).
}
\mcomment{There is a confusion between $m$ and $n$,
but Makoto will fix them. (And $n$ is used for indexing
the sequence, too.)}

\jcomment{J: moved the following lemma forward to include this result in the algorithm}

\mcommentred{I noticed that following Lemma reduces the
order of computation significantly. It seems
comparable to your Algorithm~2. Thus, Makoto
would like to ask Josef whether Algorithm~2
is superior to the lemma below.
I do not yet follow your estimation
of computational complexity, in particular when we use
precomputation or lazy evaluation. The lemma below
should be put in ``algorithm'' section.
However, if Josef has time, it would be absolutely
better to check Lemma, hopefully by writing a code
and computer experimentation. In particular,
the speed comparison with Algorithm~2 would be
necessary.}

A straightforward computation shows the following:
\begin{lemma}\label{lem5} For $h \ge 0$ let $p(h; z)$ be the polynomials given in Theorem~\ref{th:MacWilliams}. \jcomment{$\leftarrow$ added} For $0<h\leq n$, we have
$$
p(h;z)=(1-z)\frac{1-(bz)^h}{1-(bz)}
$$
and
$$
p(0;z)=(1-z)\frac{1-(bz)^{m+1}}{1-(bz)} + b^m z^{m+1}.
$$
\end{lemma}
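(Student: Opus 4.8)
The plan is to reduce both formulas to elementary finite geometric series. The key observation is that every summand in the defining expression for $p(h;z)$ collapses once one writes $z^a(b^a-b^{a-1})=\frac{b-1}{b}(bz)^a$ and $z^h b^{h-1}=\frac{1}{b}(bz)^h$, so that $p(h;z)$ becomes a polynomial in the single quantity $r:=bz$. I would then invoke the finite geometric sum $\sum_{a=1}^{h-1} r^a=(r-r^h)/(1-r)$, reading the right-hand side as the polynomial $\sum_{a=0}^{h-1} r^a$ so that no genuine division by $1-bz$ (and in particular no issue at $bz=1$) ever occurs; the quotients in the statement are to be understood in this same polynomial sense.

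For the case $0<h\le n$, I would substitute the rewritten summands to obtain $p(h;z)=1+\frac{b-1}{b}\cdot\frac{bz-(bz)^h}{1-bz}-\frac{(bz)^h}{b}$, multiply through by $b(1-bz)$, and expand. A short bookkeeping computation collects the result into $b(1-z)(1-(bz)^h)$, which is exactly $b(1-bz)$ times the claimed closed form $(1-z)\frac{1-(bz)^h}{1-bz}$; since $b(1-bz)$ is a nonzero polynomial factor, the two polynomials agree. This step is purely mechanical once the summands have been put in geometric form.

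For $h=0$ the defining sum runs all the way to $a=n$ and, crucially, lacks the subtracted term $-z^h b^{h-1}$ present in the $h>0$ branch. I would exploit this by comparing $p(0;z)=1+\frac{b-1}{b}\cdot\frac{bz-(bz)^{n+1}}{1-bz}$ with the $h>0$ closed form formally evaluated at $h=n+1$: the latter equals $1+\frac{b-1}{b}\cdot\frac{bz-(bz)^{n+1}}{1-bz}-\frac{(bz)^{n+1}}{b}$, so the two differ by exactly $\frac{(bz)^{n+1}}{b}=b^n z^{n+1}$, giving $p(0;z)=(1-z)\frac{1-(bz)^{n+1}}{1-bz}+b^n z^{n+1}$. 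The only genuine subtlety — and the point I would flag explicitly — is the $n$ versus $m$ discrepancy in the statement: the computation naturally produces $n$ throughout, and the formula as printed is recovered under the running convention $n=m$ adopted for the complexity estimates. The main obstacle is thus not any hard estimate but rather recognizing why the $h=0$ case acquires the extra additive term $b^n z^{n+1}$, namely that it is the closed form at $h=n+1$ with the missing subtraction $(bz)^{n+1}/b$ reinstated.
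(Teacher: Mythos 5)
Your computation is correct and is precisely the ``straightforward computation'' the paper alludes to without writing out: rewrite $z^a(b^a-b^{a-1})=\tfrac{b-1}{b}(bz)^a$ and $z^hb^{h-1}=\tfrac1b(bz)^h$, sum the geometric series, and observe that the $h=0$ case is the $h>0$ closed form at $h=n+1$ with the subtracted term $(bz)^{n+1}/b=b^nz^{n+1}$ restored. Your remark that the printed $m$ should be read as $n$ (harmless under the paper's running convention $n=m$) is also accurate.
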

\jcomment{Corrected last formula for $p(0;z)$}

\jcomment{The text below is new and Algorithm 1 changed.}

Note that if one wants to compute the $t$-value, then it is sufficient to compute $N_a$ for $1 \le a \le m$, thus we can replace $p(0;z) = (1-z) \frac{1-(bz)^{m+1}}{1-(bz)} + b^mz^{m+1}$ by $p(0;z) = (1-z) \frac{1-(bz)^{m+1}}{1-(bz)}$. Then to compute the $t$-value, the computation of
\begin{equation}\label{eq_weight_enumerator3}
\frac{1}{\#P} \sum_{X \in P} \prod_{i=1}^s p(\mu^*(x_i); z)
\end{equation}
can be done using the following formula
\begin{equation*}
\left(\frac{1-z}{1-(bz)}\right)^s \frac{1}{\#P} \sum_{X \in P}\prod_{i=1}^s (1-(bz)^{\nu^\ast(x_{i})}),
\end{equation*}
where
\begin{equation*}
\nu^\ast(x_{i}) = \left\{\begin{array}{rl} \mu^\ast(x_{i}) & \mbox{if } \mu^\ast(x_{i}) > 0, \\ m+1 & \mbox{if } \mu^\ast(x_{i}) = 0. \end{array} \right.
\end{equation*}

We have the formal expansions
\begin{equation*}
\frac{1-z}{1-bz} = 1 + (b-1) z + (b^2-b) z^2 + \cdots + (b^m-b^{m-1}) z^m + \cdots
\end{equation*}
and
\begin{equation*}
\left(\frac{1-z}{1-bz}\right)^s = \sum_{a=0}^\infty z^a \sum_{\satop{c_1,\ldots, c_s \ge 0}{c_1 + \cdots + c_s = a}} \prod_{i=1}^s (b^{c_i} - \lfloor b^{c_i-1} \rfloor).
\end{equation*}
Since it is sufficient to compute \eqref{eq_weight_enumerator3} only up to degree $m$ for computing the $t$-value, it suffices to use the polynomial
\begin{align*}
Q_m(z) := & \left[1 + (b-1) z + (b^2-b) z^2 + \cdots + (b^m -b^{m-1}) z^m \right]^s \pmod{z^{m+1}} \\ = & \sum_{a=0}^m z^a \sum_{\satop{c_1,\ldots, c_s \ge 0}{c_1 + \cdots + c_s = a}} \prod_{i=1}^s (b^{c_i} - \lfloor b^{c_i - 1} \rfloor).
\end{align*}

\begin{algorithm}\label{alg1}
\begin{enumerate}
\item Given: digital net $\mathcal{P} = \{\boldsymbol{x}_0,\ldots, \boldsymbol{x}_{b^m-1}\} \subset [0,1)^s$ (over a finite abelian group $G$, finite field or finite Frobenius ring with $b$ elements). Let $\boldsymbol{x}_l = (x_{l,1}, \ldots, x_{l,s})$.
\item Compute the coefficients $N_a$ of $z^a$ for $0 \le a \le m$ of the polynomial
\begin{equation}\label{eq_weight_enumerator}
Q_m(z) \frac{1}{b^m} \sum_{l=0}^{b^m-1} \prod_{i=1}^s \left(1-(bz)^{\nu^\ast(x_{l,i})} \right) \pmod{z^{m+1}},
\end{equation}
where
\begin{equation*}
\nu^*(x) = \left\{\begin{array}{rl} \lceil -\log_b x \rceil & \mbox{if } x > 0, \\ m+1 & \mbox{if } x = 0. \end{array} \right.
\end{equation*}
\item Let the coefficient of $z^a$ of the weight enumerator polynomial \eqref{eq_weight_enumerator} be $N_a$. Then compute
\begin{equation*}
t = m- \min \{a | 1 \le a \le m, N_a \neq 0\},
\end{equation*}
where the minimum is defined to be $m+1$ if $N_1= \cdots = N_m = 0$.
\item Return $t$.
\end{enumerate}
\end{algorithm}

\begin{remark}
If one uses Algorithm~\ref{alg1} for a general point set $\mathcal{P}  = \{\boldsymbol{x}_0,\ldots, \boldsymbol{x}_{b^m-1}\} \subset [0,1)^s$, then the returned value $t$ is a lower bound on the quality parameter of the point set, i.e., it implies that $\mathcal{P}$ is not a $(t-1,m,s)$-net in base $b$.
\end{remark}

\begin{remark}\label{rem3}
\begin{enumerate}
\item Note that the proof of Theorem~\ref{th:MacWilliams} can be modified by setting
\begin{equation*}
\hat{f}(X) := \sum_{K \in \bbK_{\ell}^s} f(K) (K \bullet X),
\end{equation*}
where one can choose $\ell \ge m-t$ (for instance if $t_0$ is a known lower bound for the given net, then one can choose $\ell = m-t_0$). One then obtains the polynomials
\begin{align*}
p_{\ell}(1;z) & = 1-z, \\
p_{\ell}(2;z) & = 1 + (b-1)z - b z^2, \\
\ldots & \ldots, \\
p_{\ell}(\ell;z) & = 1 + (b-1)z + (b^2-b) z^2 + \cdots + (b^{\ell-1} - b^{\ell-2}) z^{\ell-1} - b^{\ell-1} z^\ell, \\
p_{\ell}(0;z) & = 1 + (b-1)z + (b^2-b) z^2 + \cdots + (b^{\ell-1}-b^{\ell-2}) z^{\ell-1} + (b^\ell-b^{\ell-1}) z^\ell.
\end{align*}
and one computes the polynomial
\begin{equation*}
\frac{1}{b^m} \sum_{l=0}^{b^m-1} \prod_{i=1}^s p_{\ell}(\mu^*(x_{l,i});z),
\end{equation*}
where $\mu(x_{l,i}) = \lceil - \log_b x \rceil$, \jcomment{$\leftarrow$ changed formula} and
\begin{equation*}
t = m + 1 - \min\{a|1 \le a \le \ell: N_a\neq 0\},
\end{equation*}
where the minimum is defined to be $\ell+1$ if $N_1 = \cdots = N_\ell = 0$ (since we assume that $\ell \ge m-t$, $N_1=\cdots = N_\ell=0$ can only happen if the exact $t$-value is $m-\ell$). Again, this computation can be simplified using Lemma~\ref{lem5}. \jcomment{$\leftarrow$ added sentence}

This way one can reduce the computational cost of calculating the weight enumerator polynomial modulo $z^{m+1}$.

\item On the other hand, if one wants to compute the $t$-values of a point set of $b^{m_u}$ points for several values of $m \in \{m_1,\ldots, m_u\}$ ($m_1 \le m_2 \le \cdots \le m_u$), then one can, for instance, choose $\ell = m_u$ for all cases. In this case, the sum $\sum_{l=0}^{b^{m_r}-1} \prod_{i=1}^s p_{\ell}(\mu^*(x_{l,i});z)$ can be reused when computing $\sum_{l=0}^{b^{m_{r+1}} -1} \prod_{i=1}^s p_{\ell}(\mu^*(x_{l,i});z)$, i.e., one only needs to compute $\sum_{l= b^{m_r}}^{b^{m_{r+1}}-1} \prod_{i=1}^s p_{\ell}(\mu^*(x_{l,i});z)$ and add it to the previous result for the sum $\sum_{l=0}^{b^{m_{r}}-1} \prod_{i=1}^s p_{\ell}(\mu^*(x_{l,i});z)$.
\end{enumerate}
\end{remark}

We now discuss the case when computing $N_a$ for $0 \le a \le D$ with $D > m$, i.e., the computation of the weight enumerator polynomial. Using Lemma~\ref{lem5}, we can improve the order of the computational complexity of computing the weight enumerator polynomial as follows. Consider the special case where
\begin{equation}\label{eq:simple-case}
\mbox{
Any row of $X \in P \setminus \{0\}$ is non zero. \jcomment{J: replaced $P^\perp$ by $P$ and $K$ by $X$}
}
\end{equation}
Then in the computation of \jcomment{added the following}
\begin{equation}\label{eq_weight_enumerator2}
\frac{1}{\#P} \sum_{X \in P \setminus \{0\} } \prod_{i=1}^s p(\mu^*(x_i); z)
\end{equation}
no $p(0;z)$ is involved. Thus, we may compute (\ref{eq_weight_enumerator2}) as follows. Let $Z$ be a new variable, which will be substituted by $bz$ later. Compute \jcomment{changed lower summation index from $0$ to $1$}
$$
R(Z):=\frac{1}{(1-Z)^s}
\sum_{l=1}^{b^m-1} \prod_{i=1}^s (1-Z^{\mu^*(x_{l,i})}).
$$
Note that dividing a polynomial by $1-Z$ is an easy task.
Then %(we should define $W_{P^\perp}(z)$ \mcommentred{What do you think?})
$$
\mathrm{WP}_{P^\perp}(z)= (1-z)^s \frac{1}{b^m} R(bz) + \frac{1}{b^m} p(0;z)^s.
$$
\mcommentred{
Makoto is curious about whether
$\frac{1}{b^m} R(bz)$ has integer coefficients or not.
If so, $R(Z)$ has only terms of the form $z^{mk}$, $k=0,1,\ldots$,
which is strange. Thus, Makoto guesses that
it becomes integer, after multiplying $(1-z)^s$. } \jcomment{J: The equation changed, so one needs to multiply with $(1-z)^s$ first and then add $p(0;z)^s$ to get $W_{P^\perp}$.}

\jcomment{There are two improvements possible: 1. We can avoid high powers of $b$,
to prevent a ``coefficient explosion'' slowing down the algorithm.
2. Multiplication is very simple. } \jcomment{$\leftarrow$ Should this be a remark?}

Now we consider the general case.
In computing $\prod_{i=1}^s p(\mu^*(x_{l,i});z)$
in
\begin{equation*}%\label{eq_weight_enumerator2}
\frac{1}{b^m} \sum_{l=0 }^{b^m-1} \prod_{i=1}^s p(\mu^*(x_{l,i}); z)
\end{equation*}
we count
the number $\ell(\boldsymbol{x}_l)$ of $i$ with $\mu^*(x_{l,i}) = 0$. \jcomment{$\leftarrow$ J: don't really understand this sentence}
We prepare $s$ memory for the polynomials.
The $0$th one is accumulating the sum
of $\prod_{i=1}^s p(\mu^*(x_{l,i});z)$
for $\boldsymbol{x}_l$ with $\ell(\boldsymbol{x}_l)=0$, by the
method described as above.
The $r$th one is accumulating the
sum for $\boldsymbol{x}_l$ with $\ell(x_l)=r$.
The product is separated into two parts,
the product of those $x_{l,i}$ with
$\mu(x_{l,i})>0$ (for which we can use the same trick
based on new variable $Z$
as above) and the product of those $x_{l,i}$ with $\mu(x_{l,i})=0$.
We can factor out the latter (since the number is $r$),
and may sum the former terms (with fixed $r$).
After exhausting all $0 \le l < b^m$, finally, we can
substitute $Z:=bz$ for each of the $s$ polynomials, and add them up.

Note that the above approach for computing the $t$-value can also be used for the generalized MacWilliams.
%If one wants to see the lower degree terms only,  we may compute modulo $z^m$. Then, the special case for $\mu^\ast(x_i)=0$ becomes the usual case, and no ``stratification'' necessary.

\mcommentred{This is the end of my new comment IN THIS SECTION.}

We now investigate the complexity of computing the $t$-value via the weight enumerator polynomial $\frac{1}{b^m} \sum_{l=0}^{b^m-1} \prod_{i=1}^s p(\mu^*(x_{l,i});z)$. To do so, the coefficients of the product $\prod_{i=1}^s (1-(bz)^{\nu^\ast(x_{l,i})})$ only need to be computed up to $z^{m}$ (where the coefficient is bounded by $b^{ms+m}$, which can be obtained by estimating $WP_{P^\perp}(1)$) and therefore can be computed in $\mathcal{O}(m)$ operations. Thus the $t$-value can be computed in $\mathcal{O}(N s \log N)$ operations. \mcommentred{??? I could not follow. Integer computations require unlimited time, unless we specify its magnitude.}

\jcomment{The text on the computational complexity is not as relevant anymore as before. It's now all commented out.}

The computational complexity for computing the minimum distance
in a binary linear code is $NP$-hard \cite{Va97} in terms of the
dependence on the dimension. This implies that computing the $t$-value
of a digital net in base $2$ is also $NP$-hard.
\mcomment{If you assume $NP\neq P$ and even a stronger conjecture:
$NP$ does not necessarily mean exponentially difficult; there
are notions of sub-exponential.}
\footnote{Notice that the degree of the polynomials is in  practice up to, say $30$ (which yields $2^{30} \approx 10^9$ points). Even if one requires more detailed knowledge of the weights, the degree of the polynomials is likely below several hundreds. Thus polynomial multiplication algorithms by Sch\"onhage and Strassen~\cite{SS71}, Cantor and Kaltofen~\cite{CK91} or the Toom-Cook algorithm are probably not beneficial for these computations, although asymptotically they have a better performance.} %It is known that, using one of these algorithms, a product of two polynomials of degree smaller than $m$ can be computed in $\mathcal{O}(m \log m \log\log m)$ operations using $\mathcal{O}(m)$ storage. Theoretically this would yield a further speed up of the computation, but seems to be only  beneficial for very large values of $m$ and $s$.}

\subsection{Using an inverse MacWilliams identity to compute the $t$-value}

%??? Makoto's comment: everything below is correct, just if you replace ``perpendicular wrt linear'' with ``character theoretic perpendicular''

Although we stated the MacWilliams identity for digital nets and the dual group, it can be understood as a relationship between a group and its dual group. As such, it is quite obvious that a MacWilliams identity as stated in Theorem~\ref{th:MacWilliams} in the reverse direction is possible. However, the relationship of the weight enumerator polynomial with the $t$-value is lost. In the following we prove that for a certain choice of weight on the other hand, there does exist a relationship between the sum of products of polynomials ($\frac{1}{b^m} \sum_{n=0}^{b^m-1} \prod_{i=1}^s p(\mu^*(x_{n,i});z)$ in Theorem~\ref{th:MacWilliams}) and the $t$-value for the reverse statement of the MacWilliams identity. This is shown in the following theorem.

\begin{theorem}\label{thm_inv_weight}
Let $G$ be a finite abelian group consisting of $b$ elements. Let $P \subseteq \bbX_n^s$ be a sub group of size $b^m$. For $X \in \bbX_n^s$, let $x_i \in \bbX_n$ be its $i$-th row, namely, $X = (x_1,\ldots, x_s)^\top \in \bbX_n^s$. Then $\mathcal{P}$ is a strict digital $(t,m,s)$-net over $G$ with
\begin{align*}
t = & (1-s)(m+1) + \deg\Bigg(- \left(1 + (b-1) z + \cdots + (b^m-b^{m-1})z^m - b^{m} z^{m+1}\right)^s \\ & + b^{sm-m} \sum_{X \in P} z^{(m+1)s} \prod_{i=1}^s \left(z^{-\nu^\ast(x_i)}  - 1 \right)   \Bigg),
\end{align*}
where $\nu^*(x_i) = \mu^*(x_i)$ for $x_i \neq 0$ and $\nu^*(0) = m+1$.
\end{theorem}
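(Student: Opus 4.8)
The plan is to show that the polynomial inside $\deg(\cdots)$, call it $E(z)$, has degree exactly $s(m+1)-\mbox{minNRT}(P^\perp)$; the claimed formula then follows at once from Theorem~\ref{th:dual-net-minwt}, because $t=m+1-\mbox{minNRT}(P^\perp)$ gives
$$(1-s)(m+1)+\deg E=(m+1)-\mbox{minNRT}(P^\perp)=t.$$
So the whole proof reduces to pinning down $\deg E$. I would take $n=m$ throughout, as elsewhere in the paper for the complexity analysis.

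The first step is a factorization. Using Lemma~\ref{lem5} I write the first summand as $-A(z)^s$ with $A(z)=(1-z)\sum_{j=0}^{m}(bz)^j$, and I use the elementary telescoping identity $z^{m+1-\nu^\ast(x_i)}-z^{m+1}=(1-z)\sum_{l=m+1-\nu^\ast(x_i)}^{m}z^l$. Both summands of $E(z)$ therefore carry the common factor $(1-z)^s$, so $E(z)=(1-z)^sB(z)$ with
$$B(z)=-\Big(\sum_{j=0}^m b^j z^j\Big)^s+b^{sm-m}\sum_{X\in P}\prod_{i=1}^s\Big(\sum_{l=m+1-\nu^\ast(x_i)}^{m}z^l\Big),$$
and $\deg E=s+\deg B$. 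It is then convenient to pass to the reciprocal $\widetilde B(z):=z^{ms}B(1/z)$, since $\deg B=ms-\mathrm{ord}_{z=0}\widetilde B$; a short rewriting sends $\widetilde B$ to $-R(z)^s+b^{sm-m}\sum_{X\in P}\prod_i H_i(z)$, where $R(z)=\sum_{l=0}^{m}b^{m-l}z^l$ is the reciprocal of $\sum_j b^j z^j$ and $H_i(z)=\sum_{l=0}^{\nu^\ast(x_i)-1}z^l$ is the reciprocal of the $i$-th inner sum in $B$.

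The heart of the argument is a reverse application of the MacWilliams/Poisson machinery of Theorem~\ref{th:MacWilliams}, now transforming from the $\bbX$-side to the $\bbK$-side. I would compute the Fourier transform of $x\mapsto \sum_{l=0}^{\nu^\ast(x)-1}z^l$ exactly as in the proof of Theorem~\ref{th:MacWilliams} (grouping the $x$ with $\nu^\ast(x)\ge l+1$, i.e.\ $\xi_1=\cdots=\xi_l=0$, and applying the orthogonality in Lemma~\ref{lem:character}), which yields the clean formula $\widehat{(\sum_{l<\nu^\ast}z^l)}(k)=\sum_{l=\mu(k)}^{m}b^{m-l}z^l$. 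Applying Poisson summation for the dual pair $P,P^\perp$ (legitimate since $(P^\perp)^\perp=P$, Lemma~\ref{lem:character}) and using $\#(P^\perp)=b^{ms-m}$ to absorb the prefactor $b^{sm-m}$, the sum over $P$ becomes a sum over $P^\perp$:
$$\widetilde B(z)=-R(z)^s+\sum_{K\in P^\perp}\prod_{i=1}^s\Big(\sum_{l=\mu(k_i)}^m b^{m-l}z^l\Big).$$
The $K=0$ term of this sum is precisely $R(z)^s$ and cancels the leading $-R(z)^s$, leaving $\widetilde B(z)=\sum_{K\in P^\perp\setminus\{0\}}\prod_i\big(\sum_{l=\mu(k_i)}^m b^{m-l}z^l\big)$.

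To finish I read off the order at $z=0$. Each factor $\sum_{l=\mu(k_i)}^m b^{m-l}z^l$ has order $\mu(k_i)$ with strictly positive bottom coefficient $b^{m-\mu(k_i)}$, so each product has order $\mu(K)=\sum_i\mu(k_i)$, again with positive bottom coefficient. Because all coefficients are nonnegative there is no cancellation, whence $\mathrm{ord}_{z=0}\widetilde B=\min_{K\in P^\perp\setminus\{0\}}\mu(K)=\mbox{minNRT}(P^\perp)$. Unwinding gives $\deg B=ms-\mbox{minNRT}(P^\perp)$ and $\deg E=s(m+1)-\mbox{minNRT}(P^\perp)$, as required. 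The main obstacle is the explicit dual Fourier-transform computation $\widehat{(\sum_{l<\nu^\ast}z^l)}(k)=\sum_{l=\mu(k)}^m b^{m-l}z^l$ together with the exact cancellation of the $K=0$ term against $-R(z)^s$; the positivity argument then rules out any accidental drop in order. One should also dispose separately of the degenerate case $P^\perp=\{0\}$, which forces $\widetilde B=0$ and occurs only for $s=1$, $m=n$, $P=\bbX_n^1$, in agreement with the $t=0$ convention flagged after Theorem~\ref{th:MacWilliams}.
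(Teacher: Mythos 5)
Your proof is correct and is essentially the paper's own argument run in the opposite direction: the paper defines $Q(z)=\sum_{K\in P^\perp\setminus\{0\}}\prod_{i=1}^s p(\mu(k_i);z)$ on the dual side, evaluates the same character sums to get $\deg p(h;z)=m+1-h$ with negative leading coefficient (hence $\deg Q=s(m+1)-\mbox{minNRT}(P^\perp)$), and then applies Poisson summation to turn $Q$ into the stated sum over $P$. Your factorization by $(1-z)^s$ and passage to reciprocal polynomials is only a repackaging of this (indeed $p(h;z)=(1-z)\sum_{j=0}^{m-h}(bz)^j$, whose reciprocal is exactly your $\sum_{l=h}^{m}b^{m-l}z^l$), though it does turn the no-cancellation step into a clean nonnegativity-of-coefficients argument in place of the paper's fixed-sign-of-leading-coefficient argument.
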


\begin{proof}
For $k \in \bbK_m$ let
\begin{equation*}
q(k;z) = \sum_{x \in \bbX_m} z^{m+1-\nu^*(x)} (k \bullet x) - z^{m+1} \sum_{x \in \bbX_m} k \bullet x.
\end{equation*}

If $k = 0$ we have
\begin{align*}
p(0;z) := q(0; z) = & \sum_{x \in \bbX_m} z^{m+1-\nu^*(x)} - z^{m+1} \sum_{x \in \bbX_m} 1 =   1 + \sum_{a=1}^m (b^a-b^{a-1}) z^a - b^m z^{m+1}.
\end{align*}

Assume now that $k \neq 0$. Then
\begin{equation*}
q(k;z) = 1 + \sum_{a=1}^{m} z^{a} \sum_{\satop{x \in \bbX_m}{a = m+1-\nu^*(x)}} k \bullet x - z^{m+1} \sum_{x \in \bbX_m} k \bullet x.
\end{equation*}
Since $k \neq 0$ we have $\sum_{x \in \bbX_m} k \bullet x = 0$. We consider now the double sum. Let $k = (\kappa_1,\ldots, \kappa_m)$ and $x = (\xi_1,\ldots, \xi_m)$. The condition $a = m+1-\nu^*(x)$ implies that $x = (0,\ldots, 0, \xi_{m+1-a}, \ldots, \xi_m)$ with $\xi_{m+1-a}  \neq 0$. Then
\begin{align*}
\sum_{\satop{x \in \bbX_m}{a = m+1-\nu^*(x)}} k \bullet x & = \left( \sum_{\xi_{m+1-a} \in G-\{0\}} \kappa_{m+1-a}(\xi_{m+1-a}) \right) \prod_{i=1}^{a-1} \sum_{\xi_{m+1-i} \in G} \kappa_{m+1-i}(\xi_{m+1-i}) \\ = & \left\{\begin{array}{rl} 0 & \mbox{if } \kappa_{i} \neq 0 \mbox{ for an } i \in \{m+2-a,\ldots, m\}, \\ b^a-b^{a-1}  & \mbox{if } \kappa_{m} = \cdots = \kappa_{m+1-a} = 0, \\ - b^{a-1} & \mbox{if } \kappa_{m} = \cdots = \kappa_{m+2-a} = 0, \kappa_{m+1-a} \neq 0. \end{array} \right.
\end{align*}
Thus $q(k;z)$ depends only on $\mu(k)$ and we have
\begin{equation*}
p(\mu(k);z) := q(k;z) = 1 +  \sum_{a=1}^{m-\mu(k)} (b^a-b^{a-1}) z^{a} - b^{m- \mu(k)} z^{m+1-\mu(k)}.
\end{equation*}
Thus, for $0 \le h \le m$ we have
\begin{equation*}
\deg(p(h;z)) = m+1-h
\end{equation*}
and the coefficient of $z^{m+1-h}$ of $p(h;z)$ is negative in all cases.

Let
\begin{equation*}
Q(z) :=  \sum_{K \in P^\perp - \{0\} } \prod_{i=1}^s p(\mu(k_i);z).
\end{equation*}
Then, since the leading coefficients of the polynomials $p(\mu(k_i);z)$ are always negative, it follows that
\begin{equation*}
\deg(Q) =  \max_{K \in P^\perp - \{0\} } \deg \left(\prod_{i=1}^s p(\mu(k_i);z)\right) = s(m+1) - \min_{K \in P^\perp - \{0\} } \mu(K),
\end{equation*}
which is the same as
\begin{equation*}
\min_{K \in P^\perp - \{0\} } \mu(K) = s(m+1) - \deg(Q).
\end{equation*}
Thus Theorem~\ref{th:dual-net-minwt} implies that
\begin{equation*}
t = m+1 - \min_{K \in P^\perp - \{0\} } \mu(K) = (1-s)(m+1) + \deg(Q).
\end{equation*}

We now find a quick way of computing the polynomial $Q$. We have
\begin{align*}
\frac{1}{\#(P^\perp)} (Q(z) + p(0;z)^s) = & \frac{1}{\#(P^\perp)}  \sum_{K \in P^\perp} \prod_{i=1}^s \left(\sum_{x \in \bbX_m} (z^{m+1-\nu^*(x)}-z^{m+1}) (k_i \bullet x) \right) \\  = & \sum_{X \in \bbX_m^s} z^{(m+1)s} \prod_{i=1}^s (z^{-\nu^*(x_i)}-1) \frac{1}{\#(P^\perp)} \sum_{K \in P^\perp} K\bullet X \\ = & \sum_{X \in P} z^{(m+1)s} \prod_{i=1}^s (z^{-\nu^*(x_i)} - 1).
\end{align*}
Thus
\begin{equation*}
Q(z) = -(p(0;z))^s + \#(P^\perp) \sum_{X \in P} z^{(m+1)s} \prod_{i=1}^s (z^{-\nu^*(x_i)}-1).
\end{equation*}
\end{proof}

\subsection{The second algorithm for computing the $t$-value}

We now present an algorithm for computing the $t$-value of a digital net based on Theorem~\ref{thm_inv_weight}. Again the algorithm works for digital nets
over finite abelian groups, and consequently for finite rings. %The algorithm works for digital nets over finite abelian groups, finite fields or finite Frobenius rings (see Section~\ref{sec:ring-dual}), which includes $\mathbb{Z}_b$ for instance.

\begin{algorithm}\label{alg2}
\begin{enumerate}
\item Given: digital net $\mathcal{P} = \{\boldsymbol{x}_0,\ldots, \boldsymbol{x}_{b^m-1}\} \subset [0,1)^s$ (over a finite abelian group $G$, finite field or finite Frobenius ring with $b$ elements). Let $\boldsymbol{x}_l = (x_{l,1}, \ldots, x_{l,s})$.
\item Compute the coefficients of $z^a$ for $(s-1)(m+1) \le a \le s(m+1)-1$ of the polynomial
\begin{equation*}
Q(z) = -(1+(b-1)z + \cdots + (b^m-b^{m-1}) z^m - b^m z^{m+1})^s + b^{sm-m} \sum_{l=0}^{b^m-1} \prod_{i=1}^s (z^{\mu(x_{l,i}b^m)} - z^{m+1}),
\end{equation*}
where $\mu(0) = 0$ and for a positive integer $u \ge 1$ we have $\mu(u) = 1 + \lfloor \log_b u \rfloor$.
\item Compute $t = (1-s)(m+1) + \deg(Q)$.
\item Return $t$.
\end{enumerate}
\end{algorithm}

%\begin{remark}
%Theorem~\ref{thm_inv_weight} and therefore Algorithm~\ref{alg2} apply to scrambled digital %nets, since in this case one can include the inverse permutations of those used in the %scrambling algorithm in the definition of the Walsh functions. Thus these permutations %cancel out and the result therefore holds.
%\end{remark}

As opposed to the first algorithm, Algorithm~\ref{alg2} is not extensible as explained in Remark~\ref{rem3}, item 2). The value of $m$ can however be adjusted in a similar manner as in Remark~\ref{rem3} for the first algorithm. If doing so one needs to use $\mu(x_{l,j} b^{m'})$, where $m'$ is the new depth.

The computationally most expensive step in the algorithm is $2.$, where products of polynomials need to be computed for $0 \le l < b^m$. Computing the product in general requires $\mathcal{O}(s^2 m)$ operations. However, only the $m+1$ most significant coefficients need to be computed to obtain the $t$-value and these $m+1$ coefficients can be computed in $\mathcal{O}(m s)$ operations. This can, for instance, be done by computing the first $m+1$ coefficients of the product of reciprocals
\begin{equation*}
\prod_{i=1}^s (-1+y^{m+1-\mu(x_{l,i}b^m)}),
\end{equation*}
which are the same as the $m+1$ most significant coefficients of $\prod_{i=1}^s (z^{\mu(x_{l,i}b^m)}-z^{m+1})$, and then storing them in the correct place.
Thus the computational cost of the algorithm is $\mathcal{O}(N s \log  N)$ operations.

\jcomment{The following text is commented out, since it is also commented out for Algorithm 1.}

\jcomment{
As for Algorithm~\ref{alg1}, the values $\mu(x_{n,i}b^m)$ have a similar distribution as analyzed above. This information can be used to precompute some products $\prod_{i=1}^s (z^r-z^{m+1})$ which frequently occur, thus reducing the computational cost of the algorithm further.
}

%Suppose that $G$ is a finite commutative ring with unity.
In Section~\ref{sec:ring-dual} we will discuss the relation between our character-theoretic dual $P^\perp$ and the dual net defined by a ring-theoretic inner product \mcomment{(???symbol)} introduced in \cite{np}.
\mcomment{??? I need to check that
they dealt only finite fields.} \jcomment{Josef: I checked it, they only deal with finite fields}
It is proved that for a wide class of finite rings
these notions coincide in a suitable sense. However, there are counter
examples for general finite commutative rings $R$ and $P$ in which
the ring-theoretic dual is strictly larger than the character-theoretic dual.
On the other hand, it is proved that if $P$ is a free $R$-module, the
formula in Theorem~\ref{th:dual-net-minwt} holds if $P^\perp$ is
replaced by the ring-theoretic dual net.

%\mcomment{What holds???}
% defined in \cite{np}
%for the finite field cases.

\section{Numerical result}\label{sec:num-res}

As a proof of concept we computed the $t$-values of digital nets obtained from a Sobol' sequence as implemented in Matlab 2011a. The results are presented in the table below. In these experiments, Algorithm~\ref{alg2} is slightly faster than Algorithm~\ref{alg1} (although both algorithms have not been optimized in our experiments).

The table also contains the exact $t$-values of the Sobol' sequence as computed in \cite{DN} for dimension up to $10$ in the first column. We remark that using different direction numbers changes the $t$-values for particular $m$ and $s$, see \cite{DN}.

\begin{table}
\begin{tabular}{|r||r|r|r||r|r|r|r|r||r|r|r|r|r||r|r|r|r|r|r|r|}
  \hline
$m \backslash s$ & 3 & 4 & 5 & 6 & 7 & 8 & 9 & 10 & 11 & 12 & 13 & 14 & 15 & 16 & 17 & 18 & 19 & 20 & 21 & 22 \\ \hline \hline
2 & 1 & 1 & 1 & 1 & 1 &  1 &  1 & 1 & 1 & 1 & 1 & 1 & 1 & 1 & 1 & 1 & 1 & 1 & 1 & 1 \\ \hline
3 & 1 & 2 & 2 & 2 & 2 &  2 &  2 & 2 & 2 & 2 & 2 & 2 & 2 & 2 & 2 & 2 & 2 & 2 & 2 & 2 \\ \hline
4 & 1 & 1 & 3 & 3 & 3 &  3 &  3 & 3 & 3 & 3 & 3 & 3 & 3 & 3 & 3 & 3 & 3 & 3 & 3 & 3 \\ \hline
5 & 1 & 2 & 2 & 2 & 3 &  3 &  3 & 4 & 4 & 4 & 4 & 4 & 4 & 4 & 4 & 4 & 4 & 4 & 4 & 4 \\ \hline\hline
6 & 1 & 2 & 3 & 3 & 3 &  4 &  4 & 5 & 5 & 5 & 5 & 5 & 5 & 5 & 5 & 5 & 5 & 5 & 5 & 5 \\ \hline
7 & 1 & 2 & 3 & 4 & 4 &  4 &  4 & 6 & 6 & 6 & 6 & 6 & 6 & 6 & 6 & 6 & 6 & 6 & 6 & 6 \\ \hline
8 & 1 & 3 & 3 & 4 & 4 &  5 &  5 & 5 & 5 & 5 & 5 & 5 & 5 & 5 & 5 & 5 & 5 & 6 & 6 & 6 \\ \hline
9 & 1 & 3 & 3 & 4 & 5 &  5 &  5 & 5 & 6 & 6 & 6 & 6 & 6 & 6 & 6 & 6 & 6 & 7 & 7 & 7 \\ \hline
10& 1 & 3 & 4 & 4 & 4 &  6 &  6 & 6 & 7 & 7 & 7 & 7 & 7 & 7 & 7 & 7 & 7 & 7 & 7 & 8 \\ \hline\hline
11& 1 & 3 & 5 & 5 & 5 &  6 &  6 & 6 & 8 & 8 & 8 & 8 & 8 & 8 & 8 & 8 & 8 & 8 & 8 & 8 \\ \hline
12& 1 & 3 & 4 & 5 & 5 &  7 &  7 & 7 & 9 & 9 & 9 & 9 & 9 & 9 & 9 & 9 & 9 & 9 &9&9 \\ \hline
13& 1 & 3 & 5 & 6 & 6 &  7 &  7 & 7 & 9 & 9 & 9 & 9 & 10 & 9 & 9 & 9 & 9 & 10&10&10  \\ \hline
14& 1 & 3 & 4 & 6 & 7 &  7 &  8 & 8 & 9 & 9 & 9 & 9 & 9 & 10 & 10 & 10 & 10 & 10&10&10  \\ \hline
15& 1 & 3 & 5 & 5 & 6 &  8 &  9 & 9 & 9 & 9 & 9 & 9 & 9 & 9 & 10 & 10 & 10 & 10&11&11  \\ \hline\hline
16& 1 & 3 & 4 & 6 & 7 &  9 &  9 & 9 & 9 &10 &10 &10 &10 &9 &9 &10 & 10 & 11&12&12 \\ \hline
17& 1 & 3 & 5 & 7 & 8 &  8 &  8 &10 &10 & 11&11 &11 & 11 & 10 &10 &10 &11 &12&12&12 \\ \hline
18& 1 & 3 & 4 & 7 & 7 &  8 &  9 &10 &10 & 11&12 &12 & 12 & 11 & 11 &11 &11 &13&13&13\\ \hline
19& 1 & 3 & 5 & 7 & 7 &  8 & 10 &10 &10 & 11&12 &12 & 12 & 12 & 12 & 12 &13 &14&14&14 \\ \hline
20& 1 & 3 & 4 & 7 & 8 & 9  & 11 &11 &11 &11 &13 &13 & 13 & 12 & 12 & 12 & 13 &14&14&14 \\ \hline\hline
21& 1 & 3 & 5 & 6 & 7 &10  & 12 &12 & 12& 12&12 &12 &12 &12 &12 &13 & 14 &15&15&15 \\ \hline
22& 1 & 3 & 5 & 7 & 8 & 10 &11  &11 & 12& 12&13 &13 &13 &13 &13 &13 &13 & 16&16&16 \\ \hline
23& 1 & 3 & 5 & 7 & 8 & 11 & 11 &12 &13 & 13&14 &14 &14 &14 &14 &14 &14 &15&15&15  \\ \hline
24& 1 & 3 & 5 & 8 & 9 &11  &12  &12 & 12& 13&15 &15 &15 &15 &15 &15 &15 &16&16&16 \\ \hline
25& 1 & 3 & 5 & 7 & 9 & 10 &11  &13 & 13& 13&16 &16 &16 &16 &16 &16 &16 &17&17&17 \\ \hline \hline
$t_S$ & 1 & 3 & 5 & 8 & 11 & 15 &19 & 23&  &   &   & & & & & & & & & \\
\hline
\end{tabular}
\caption{The exact $t$-values for digital nets obtained from Sobol' sequence as implemented in Matlab 2011a. The last row (marked by $t_S$) contains the $t$-values of the Sobol' sequence from \cite{DN}.}
\end{table}

%Notice that in some instance the $t$-value decreases as $m$ increase (for instance, $s=7$ and $m =14,15$). In this case, doubling the number of points increases the strength of the net by a factor of $4$.

\section{Nets over finite abelian groups and over finite rings}\label{sec:group-ring}

In this section we generalize the definition of $(t,m,s)$-nets and study digital nets over finite abelian groups and show how this theory relates to digital nets defined over finite rings.

\subsection{A generalization to (T,M,s)-nets}
As considered in Section~\ref{sec:net-group},
our framework is a point set
$P \subset \bbX_n^s=M_{s,n}(G)$,
which is injectively mapped
by $\varphi^{-1}: \bbX_n^s \to [0,1)^s$
into the $s$-dimensional cube.

Note that the cardinality of a subgroup $P$ of $\bbX_n^s$ is not necessarily
of the form of $b^m$, and the cardinality of
the points in the elementary $b$-adic intervals
need not be of the form of $b^t$. This leads us to the following definition.

\begin{definition}\label{lem:net-by-uniformness-general}
Let $M$ be a positive integer and for any integers $d_1,\ldots,d_s\geq 0$,
let $Y:=Y(d_1,\ldots,d_s)$ denote the union of
$$\{(1,1), (1,2),\ldots,(1,d_1)\},
\{(2,1), (1,2),\ldots,(1,d_2)\},
\ldots,
\{(s,1), (1,2),\ldots,(1,d_s)\}.$$

A finite subset $P\subset \bbX_n^s$ of
cardinality $M$ is a (combinatorial)
$(T,M,s)$-net in base $b$ if and only if for any $b^{d_1+d_2+\cdots+d_s} \leq M/T$,
the composition $P \subset \bbX_n^s \to \bbX_n^s(Y)$ is uniform. If $T$ is the smallest value such that $P$ is a $(T,M,s)$-net in base $b$, then we call $P$ a strict $(T,M,s)$-net in base $b$.
\end{definition}

In this case,
for any integer $d\leq M/T$,
the number of points
in each elementary b-adic interval of volume
$b^{-d}$
has the same number (depending only on $d$,
namely $M/b^d$) of points in $\varphi^{-1}(P)$.

In this notation, the original $(t,m,s)$-net is a $(b^t,b^m,s)$-net. On the other hand, let $P$ be a $(t,m,s)$-net in base $b$ for some admissible parameters $t,m,s,b$. Let $\lambda \ge 1$ be an integer and let the multiset $P_\lambda$ be the point set where each point is taken with multiplicity $\lambda$. Then $P_\lambda$ is a $(\lambda b^t,\lambda b^m, s)$-net in base $b$. If $P$ is a strict $(t,m,s)$-net in base $b$, then $P_\lambda$ is a strict $(\lambda b^t, \lambda b^m, s)$-net in base $b$. For $\lambda$ not of the form $b^k$, this provides examples of point sets satisfying Definition~\ref{lem:net-by-uniformness-general}.

\subsection{Generating basis and heterogenous case}

Let $P$ be a subgroup of $\bbX_n^s$. In practice, we need to enumerate the points in $P$.
A possible way is to find a generating set of $P$
as an abelian group.
More precisely, the structure theorem
of a finite abelian group states that
$$
P \cong \prod_{i=1}^r \bbZ_{q_i},
$$
where $q_i$ is a prime power.
Then, we precompute the $r$ elements of $P$
corresponding to $(1,0,\ldots,0)$, $\ldots$,
$(0,0,\ldots,0,1)$. By a lexicographic
enumeration of the elements of $\prod_{i=1}^r \bbZ_{q_i}$
we can enumerate points in $P$ through the above
isomorphism.
In practice, it may be desirable that for any $1\leq r'\leq r$,
the image of $\prod_{i=1}^{r'} \bbZ_{q_i}$ has
a low discrepancy property. Then, we can apply QMC-integration
for the first $\prod_{i=1}^{r'} \bbZ_{q_i}$ points,
and if the result is not satisfactory, we may increase $r'$,
similarly as for $(t,s)$-sequences \cite[Chapter~4]{DP10}.

We remark that in the results so far, we may
choose different (i.e. non-isomorphic as
abelian groups) $G$ of the same cardinality $b$
in the coordinate of $\bbX_n=M_{s,n}(G)$.
Namely, we may choose $n\times s$ possibly different
finite abelian groups $G_{i,j}$.
By taking its dual $G_{i,j}^*$, all the results so far
hold.

We may even change the cardinality of $G_{i,j}$,
so that $b_{i,j}:=\#(G_{i,j})$ depends on $(i,j)$.
The embedding of the
$i$-th row $\prod_{j=1}^n G_{i,j}$
into the $i$-th coordinate in the interval $[0,1)$ is given
as follows: first equating $[0,1)$
into $b_{i,1}$ intervals, then each interval is
equated into $b_{i,2}$ intervals, and so on.
For a subgroup
$P \subset \prod_{1\leq i \leq s, 1\leq j \leq n} G_{i,j}$,
we can define
$P^\perp  \subset \prod_{1\leq i \leq s, 1\leq j \leq n} G_{i,j}^*$.
We can define elementary $(b_{i,j})$-adic intervals.
For a subset $Y \subset \{1,2,\ldots,s\}\times \{1,2,\ldots,n\}$,
we define its co-volume
$\mathrm{Vol}(Y):= \prod_{(i,j)\in Y}b_{i,j}$.

A point set $P$ is then a $(T,M,s)$-net
if the cardinality of $P$ is $M$ and
the mapping from $P$ to $\prod_{(i,j)\in Y} G_{i,j}$
is uniform for any $Y$ of the
form $Y(d_1,\ldots,d_s)$ whose co-volume is smaller than or
equal to $M/T$.

A version of an NRT-weight and a MacWilliams-type identity
can be defined and proved, but we omit its explicit description.

\subsection{Ring-theoretic dual net}\label{sec:ring-dual}
The original notion of the dual net \cite{np}
is defined in the case where the digital net is defined using a finite field,
and the definition uses an inner product.
We introduce a straight forward generalization
to a finite ring. We use the letter $R$ for finite rings in the following and consider digital nets over $R$ (rather than $G$). Hence in the following we use
\begin{align*}
\bbX_n & := \{x = (\xi_1,\ldots, \xi_n): \xi_i \in R\}.% \\ \bbK_n & := \{k = (\kappa_1,\ldots, \kappa_n): \kappa_i \in R^*\}.
\end{align*}

\begin{definition}
Let $R$ be a finite ring. Then,
$\bbX_n^s$ is a free left $R$-module of rank $sn$.
Let $P \subset \bbX_n^s$ be a sub $R$-module.
If $P$ is a $(t,m,s)$-net in the
sense of the combinatorial $(t,m,s)$-net
(Definition~\ref{def:tms-net}), then we call $P$ a digital $(t,m,s)$-net
over the ring $R$.
\end{definition}
This definition is slightly more general
than that given in \cite{LNS}, which
treats the case of a free $R$-module $P$.

Recall that $P \subset \bbX_n^s$ is
{\em free} of rank $m$ if there are $X_1,X_2, \ldots,X_m \in P$
such that every element $X\in P$ is
uniquely represented by an $(x_1,\ldots,x_m) \in R^m$
as
$$
X=x_1X_1+\cdots+x_mX_m.
$$
Namely, $X_1,\ldots,X_m$ are linearly independent
over $R$ and generate $P$ as an $R$-module.
\begin{definition}
Let $C_1, \ldots, C_s \in M_{n,m}(R)$ be matrices and let $X_1,\ldots,X_m$ be defined by:
the $j$-th row of $X_i$ ($n$-dimensional)
is the transpose of the
$i$-th column of $C_j$. Assume that $X_1, \ldots, X_m$ are a free basis. Then we call the set $P \subset \bbX_n^s$ which is generated by the
free basis $X_1,\ldots,X_m$,
the digital net generated by the matrices $C_1,\ldots, C_s$.
\end{definition}
The above $P$ is the same point set as defined
in \cite{LNS} in Section~1.
This follows by just comparing the columns of $C_j$ with the rows of $X_m$. The assumption
that $\#(P)=b^m$ is equivalent to that
$X_1,\ldots,X_m$ give a basis of $P$.

When $R$ is a finite field, the notion of
the dual net is introduced in \cite{np}.
A straight forward generalization to a finite ring is:
\begin{definition}
Let $P\subset \bbX_n^s$ be a left $R$-module and let $X, X' \in P$ with $X = (x_{i,j})$ and $X' = (x'_{i,j})$. We define
$$
P^\vee
:=\{X' \in \bbX_n^s | \sum_{i,j}x_{i,j}'x_{i,j}=0
\mbox{ for any }X \in P \}
\subset \bbX_n^s.
$$
\end{definition}
The expression of the exact $t$-value of $P$
in terms of the minimum NRT-weight of $P^\vee$
is given in \cite{np} in the case where $R$
is a finite field.

To deduce their result
from our character theoretic dual $P^\perp \in \bbX_n^s$,
we need one definition on finite commutative rings, cf. \cite{Wood99}.
%???This notion should be known???

\begin{definition}
A finite commutative ring $R$ has a generating character  if
there exists a character $c:R \to T$ as an additive group
such that the pairing obtained by the composition
with the multiplication $\mu$
$$
R \times R \stackrel{\mu}{\to} R \stackrel{c}{\to} T
$$
is a perfect pairing, namely,
$$\theta_c: R \to R^*, x \mapsto c(x\cdot (-))$$
is an isomorphism of abelian groups. We call such a ring $R$ a finite commutative ring with generating character.
\end{definition}
In \cite{Wood99} it was shown that finite commutative rings with a generating character are equivalent to finite commutative Frobenius rings, which in turn are equivalent to finite commutative Gorenstein rings.

It is easy to see that the class of such rings
is closed under taking cartesian products
and subrings (because of the finiteness: a subring $S\subset R$
has $S \to S^*$ as a restriction of $R \to R^*$,
which is an injection and hence bijective).
It includes $\bbZ_b$, since we may then take $c:\bbZ_b \to T$
as an injective group homomorphism. It includes finite fields,
since a finite field $R$ is a
vector space over a prime field $\bbZ_p$,
we may take any
nontrivial $\mathbb{Z}_p$-linear mapping $R \to \mathbb{Z}_p$ and an
injective group homomorphism $\mathbb{Z}_p \to T$, and then $c$ is obtained as the
composition $c: R \to \mathbb{Z}_p \to T$. Thus,
it is a large class of finite commutative associative
rings, but it does not include all finite rings, see Remark~\ref{rem:counter-example} below.
\begin{lemma}
Suppose that $R$ is a finite commutative associative ring with generating character.
Then the isomorphism
$$
\theta_c: \bbX_n^s \to \bbK_n^s
$$
of abelian groups induced by $\theta_c$ componentwise,
induces an isomorphism
$$
P^\vee \to P^\perp.
$$
\end{lemma}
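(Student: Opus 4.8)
The plan is to show that $\theta_c$ carries $P^\vee$ bijectively onto $P^\perp$ by translating the defining condition of $P^\perp$ into one about the ring-theoretic bilinear form, and then exploiting \emph{both} the generating-character property and the $R$-module structure of $P$. Write $\langle X', X\rangle := \sum_{i,j} x'_{i,j} x_{i,j} \in R$ for the form used to define $P^\vee$. Since $\theta_c:\bbX_n^s \to \bbK_n^s$ is an isomorphism of abelian groups, it suffices to prove that for every $X' \in \bbX_n^s$ one has $\theta_c(X') \in P^\perp$ if and only if $X' \in P^\vee$.

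First I would unwind the pairing. If $K = \theta_c(X')$, then its $(i,j)$ entry is $\kappa_{i,j} = c(x'_{i,j}\cdot(-))$, so for any $X \in P$,
$$ K \bullet X = \prod_{i,j} \kappa_{i,j}(x_{i,j}) = \prod_{i,j} c(x'_{i,j} x_{i,j}) = c\Big(\sum_{i,j} x'_{i,j} x_{i,j}\Big) = c(\langle X', X\rangle), $$
where the third equality uses that $c$ is an additive character. Hence $\theta_c(X') \in P^\perp$ is equivalent to $c(\langle X', X\rangle) = 1$ for all $X \in P$, while $X' \in P^\vee$ is equivalent to $\langle X', X\rangle = 0$ for all $X \in P$. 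The implication $X' \in P^\vee \Rightarrow \theta_c(X') \in P^\perp$ is then immediate, since $\langle X',X\rangle = 0$ forces $c(0)=1$; this gives $\theta_c(P^\vee) \subseteq P^\perp$.

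The main point, and where I expect the work to lie, is the reverse implication, where the weaker condition $c(\langle X', X\rangle) = 1$ must be upgraded to $\langle X', X\rangle = 0$; this is exactly the step that uses both hypotheses. I would fix $X \in P$ and use that $P$ is a left $R$-module, so $rX \in P$ for every $r \in R$. Applying the computation above to $rX$ and using commutativity of $R$ (so that the entrywise action gives $\langle X', rX\rangle = r\langle X', X\rangle$) yields $c\big(r\langle X', X\rangle\big) = c(\langle X', rX\rangle) = 1$ for all $r \in R$. Setting $a := \langle X', X\rangle$, this says precisely that the character $\theta_c(a) = c(a\cdot(-)) \in R^*$ is trivial. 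Since $R$ has a generating character, $\theta_c$ is injective, whence $a = 0$. As $X$ was arbitrary, $X' \in P^\vee$, and the proof is complete. The only delicate bookkeeping is tracking the module action entrywise to obtain $\langle X', rX\rangle = r\langle X', X\rangle$; beyond that, everything reduces to the orthogonality of characters packaged into the generating-character isomorphism $\theta_c$.
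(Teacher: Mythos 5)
Your proposal is correct and follows essentially the same route as the paper: both reduce $K\bullet X$ to $c\bigl(\sum_{i,j}x'_{i,j}x_{i,j}\bigr)$, get the inclusion $\theta_c(P^\vee)\subseteq P^\perp$ for free, and for the converse use the $R$-module structure of $P$ together with commutativity to show $c(ra)=1$ for all $r\in R$ forces $a=0$ via injectivity of $\theta_c$. The only cosmetic difference is that the paper packages the values $\langle X',X\rangle$ into an $R$-submodule $S\subseteq R$ and argues $S=0$, whereas you argue element by element.
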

\begin{proof}
For $X'\in \bbX_n^s$,
$\theta_c(X')=(\theta_c(x'_{i,j}\cdot(-))) \in \bbK_n^s$.
The condition $X \in P^\vee$ is equivalent to
$$
\sum_{i,j} x'_{i,j}x_{i,j}=0
\mbox{ for any } X \in P.
$$
The condition $X \in P^\perp$ is,
through the identification $R \to R^*$ above,
$$
c(\sum_{i,j} x'_{i,j}x_{i,j})=1
\mbox{ for any } X \in P.
$$
Thus, $P^\vee \subset P^\perp$ is automatic.
For the converse inclusion, we need the fact
that $P$ is an $R$-module.
The set
$$
S:= \left\{\sum_{i,j} x'_{i,j}x_{i,j} |
X \in P \right\} \subset R
$$
is then an $R$-module (by commutativity of $R$).
The condition $X'\in P^\perp$ is
equivalent to $c(S)=1$. This implies that
any element $s \in S$ has the property $\theta_c(rs)=1$
for all $r$. By the assumption of the character,
this implies $s=0$, hence $S=0$, and $X' \in P^\vee$.
\end{proof}
\begin{corollary}
Let $R$ be a finite commutative ring with generating character.
The result in \cite{np} based on
the ring theoretic dual net $P^\vee$
follows from our character theoretic result Theorem~\ref{th:dual-net-minwt}.
\end{corollary}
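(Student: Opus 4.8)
The plan is to show that the isomorphism $\theta_c \colon P^\vee \to P^\perp$ furnished by the preceding lemma is \emph{weight preserving} for the NRT-weight, so that the formula of Theorem~\ref{th:dual-net-minwt} translates verbatim into a statement about the ring-theoretic dual $P^\vee$, thereby recovering the result of \cite{np}.

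First I would recall that the NRT-weight of a row $x = (\xi_1,\ldots,\xi_n)$ depends only on the position of its last nonzero coordinate, namely $\mu(x) = \max\{j : \xi_j \neq 0\}$ with $\mu(0)=0$, and the weight of a matrix is the sum of its row weights as in \eqref{eq:NRT}. Crucially, this same combinatorial definition makes sense verbatim on the ring-theoretic side $\bbX_n^s = M_{s,n}(R)$, where \cite{np} measures elements of $P^\vee$, and on the character-theoretic side $\bbK_n^s = M_{s,n}(R^*)$, where we measure elements of $P^\perp$. So the only thing to check is that $\theta_c$ carries one weight to the other.

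The key observation is that the componentwise map $\theta_c$ preserves the zero/nonzero pattern of every single entry. Since $\theta_c \colon R \to R^*$, $x \mapsto c(x \cdot (-))$, is a bijection of abelian groups and sends $0$ to the trivial character $0 \in R^*$ (because $c(0 \cdot (-))$ is the trivial character), we have $x'_{i,j} = 0$ if and only if $\theta_c(x'_{i,j}) = 0$ for every pair $(i,j)$. Consequently the index of the last nonzero entry in each row is unchanged, so $\mu(x'_i) = \mu(\theta_c(x'_i))$ for all $i$, and summing over $i$ gives $\mu(X') = \mu(\theta_c(X'))$ for every $X' \in \bbX_n^s$. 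Restricting to $P^\vee$, the lemma's isomorphism becomes a weight-preserving bijection $P^\vee \to P^\perp$, whence $\mbox{minNRT}(P^\vee) = \mbox{minNRT}(P^\perp)$.

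Finally I would combine this equality with Theorem~\ref{th:dual-net-minwt} to conclude that $P$ is a strict $(t,m,s)$-net with $t = m+1-\mbox{minNRT}(P^\perp) = m+1-\mbox{minNRT}(P^\vee)$, which is precisely the expression of the exact $t$-value in terms of the ring-theoretic dual net established in \cite{np} (and which here holds for the whole class of finite commutative rings with generating character, not just finite fields). I do not expect a genuine obstacle: every step is an equality between already-established quantities. The one point requiring care is the preservation of supports by $\theta_c$, which rests entirely on $\theta_c$ being a bijection with $\theta_c(0)=0$; this is exactly what the generating-character hypothesis guarantees, and it is what makes the purely combinatorial NRT-weight transport across the isomorphism.
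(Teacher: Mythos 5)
Your proposal is correct and follows exactly the route the paper intends: the corollary is stated without proof immediately after the lemma establishing the componentwise isomorphism $\theta_c\colon P^\vee \to P^\perp$, and the implicit argument is precisely your observation that $\theta_c$ preserves the zero/nonzero pattern of each entry, hence the NRT-weight, so $\mbox{minNRT}(P^\vee)=\mbox{minNRT}(P^\perp)$ and Theorem~\ref{th:dual-net-minwt} transfers verbatim. Your write-up simply makes explicit the weight-preservation step that the paper leaves to the reader.
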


\begin{remark}\label{rem:counter-example}
Let $R$ be the finite commutative associative ring
$\bbZ_p[x,y]/(x^2,xy,y^2)$.
Then the $\bbZ_p$-linear span $P:=<x> \subset R$
is an $R$-module of cardinality $p$ and
satisfies $P^\vee=<x,y>$, whose cardinality is $p^2$.
On the other hand, $P^\perp$ has the same
cardinality $p$ as $P$. Thus, $R$ does not have a generating character
and the cardinality of $P^\vee$ is strictly larger than the cardinality of $P^\perp$.
\end{remark}

We see that if $R$ is finite
commutative associative ring with generating character, then
the two notions of duality coincide, and
the minimum NRT-weight of $P^\vee$ gives the strict
$t$-value.

Suppose that $R$ is a finite
commutative associative ring, not assuming
that it has a generating character. Still, if $P$ is a free $R$-module,
the same result for the minimum NRT-weight holds, by the following proposition.
\begin{proposition}
Under the above assumption on $R$,
let $P \subset \bbX_n^s$ be a free $R$-module.
Then, for any $Y$,
the composition $P \subset \bbX_n^s \to \bbX_n^s(Y)$
is surjective if and only if
$P^\vee \cap \bbX_n^s(Y)=0$, where
$\bbX_n^s(Y):=\{X=(x_{i,j}) | x_{i,j}=0 \mbox{ for all $(i,j) \notin Y$}
 \}
\subset \bbX_n^s$.
\end{proposition}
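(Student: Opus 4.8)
The plan is to reduce the statement to a purely module-theoretic fact about the standard bilinear pairing on a free module over $R$, and then to settle that fact by decomposing $R$ into local rings and exploiting the socle. The freeness of $P$ will play essentially no role beyond giving a convenient matrix description, so I would aim at a proof valid for any submodule $P$.

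First I would pass to the coordinate submodule $W:=\bbX_n^s(Y)\cong R^{t}$, where $t=\#(Y)$, equipped with the standard nondegenerate pairing $\langle X,X'\rangle=\sum_{i,j}x_{i,j}x'_{i,j}$, and set $Q:=\pr_Y(P)\subseteq W$. Since every $w\in W$ is supported on $Y$, one has $\langle X,w\rangle=\langle \pr_Y(X),w\rangle$ for all $X\in P$; letting $X$ range over $P$, so that $\pr_Y(X)$ ranges over $Q$, this identifies $P^\vee\cap W$ with the annihilator $Q^{\perp}:=\{w\in W:\langle q,w\rangle=0 \text{ for all } q\in Q\}$ computed inside $W$. (If $P$ is free one may instead take $Q$ to be the row space of the matrix $A$ whose rows are the images under $\pr_Y$ of a basis of $P$, but this is not needed.) Thus the claim becomes: for a submodule $Q$ of the free module $R^{t}$ over the finite commutative ring $R$, one has $Q=R^{t}$ if and only if $Q^{\perp}=0$. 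The forward implication is immediate, since if $Q=R^{t}$ then testing $w\in Q^{\perp}$ against the coordinate vectors $e_1,\dots,e_t$ forces $w=0$.

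The content lies in the converse, which I would prove in contrapositive form: a proper submodule $Q\subsetneq R^{t}$ has $Q^{\perp}\neq 0$. As $R$ is finite it is Artinian and splits as a product of local rings $R=\prod_\nu R_\nu$; correspondingly $R^{t}=\prod_\nu R_\nu^{t}$, $Q=\prod_\nu Q_\nu$ and $Q^{\perp}=\prod_\nu Q_\nu^{\perp}$, and properness of $Q$ forces some factor $Q_{\nu_0}\subsetneq R_{\nu_0}^{t}$ to be proper. Hence it suffices to treat $R$ local with maximal ideal $\mathfrak{m}$. There Nakayama's lemma gives $Q+\mathfrak{m}R^{t}\neq R^{t}$, so $Q$ lies in a maximal submodule $M=\{v\in R^{t}:\sum_i u_i v_i\in\mathfrak{m}\}$ for some tuple $u=(u_1,\dots,u_t)$ in which at least one $u_i$ is a unit (the reduction of $u$ defines the nonzero functional $R^{t}/\mathfrak{m}R^{t}\to R/\mathfrak{m}$ cutting out $M$).

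The decisive device is the socle. Because $\mathfrak{m}$ is nilpotent, its last nonzero power lies in $\mathrm{Ann}_R(\mathfrak{m})$, so there is a nonzero $\sigma$ with $\sigma\mathfrak{m}=0$. Then $w:=\sigma u$ is nonzero, since a unit coordinate times $\sigma$ is nonzero, while for every $q\in M$ we have $\langle q,w\rangle=\sigma\big(\sum_i u_i q_i\big)=0$ because $\sum_i u_i q_i\in\mathfrak{m}$ and $\sigma$ annihilates $\mathfrak{m}$. Thus $0\neq w\in M^{\perp}\subseteq Q^{\perp}$, which closes the argument. I expect this final step to be the main obstacle: over a non-Gorenstein local ring such as the $\bbZ_p[x,y]/(x^2,xy,y^2)$ of Remark~\ref{rem:counter-example}, the naive expectation that the orthogonal complement of a proper subspace is nonzero has no self-duality of $R$ to lean on, and it is precisely the socle element $\sigma$ that rescues the pairing and explains why finiteness and commutativity alone suffice, with no generating character required.
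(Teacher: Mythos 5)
Your proof is correct, and it takes a genuinely different route from the paper's. The paper proves the proposition by rerunning the argument of Lemma~\ref{lem:duality-in-perp} with the Pontryagin dual $(-)^*$ replaced by the $R$-linear dual $\Hom_R(-,R)$, and the key input is the Nashier--Nichols theorem \cite[Proposition~2.6]{NN91}: over a commutative ring of Krull dimension zero, an injective map of finite free modules extends to an invertible matrix, hence dualizes to a surjection. That double-dualization step is precisely where the freeness of $P$ is used, since one needs $\Hom_R(\Hom_R(P,R),R)\cong P$ and a matrix description of $P\to\bbX_n^s(Y)$. You instead collapse the whole statement onto the image $Q=\pr_Y(P)\subseteq R^{t}$ and prove directly that $Q^{\perp}=0$ forces $Q=R^{t}$, via the decomposition of a finite commutative ring into local factors, Nakayama's lemma, and a socle element $\sigma$ with $\sigma\mathfrak{m}=0$; all the steps check out, including the identification $P^\vee\cap\bbX_n^s(Y)=Q^{\perp}$ and the fact that $\sigma u$ is nonzero because some coordinate of $u$ is a unit. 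Your argument is self-contained (no appeal to the Steinitz-property literature) and strictly more general: freeness of $P$ is never used, so the equivalence holds for an arbitrary sub-$R$-module, whereas the paper's method cannot dispense with that hypothesis. What the paper's approach buys in exchange is brevity --- a one-line modification of an earlier proof plus a citation --- and a lemma stated at the level of generality of Krull-dimension-zero rings rather than finite ones.
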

\begin{proof}
For an $R$-module $V$,
let us write $\check{R}:=\Hom_R(V, R)$.
It is obvious that if
$f: R^n \to R^m$ is surjective, then
$\check{f}:\check{R^m} \to \check{R^n}$
is injective. The converse, $f$ injective
does not necessarily imply $\check{f}$ surjective
for a general ring, but it is true for
a finite ring $R$ (see the next lemma).

This makes it possible to replace $P^*$ with $\check{P}$,
then $P^\perp$ with $P^\vee$, in the proof
of Lemma~\ref{lem:duality-in-perp}, if $P$ is
a free $R$-module.
\end{proof}
\begin{corollary}
Suppose that $P$ is a free $R$-module.
Then, the statement of Theorem~\ref{th:dual-net-minwt}
is true if $P^\perp$ is replaced with $P^\vee$.
\end{corollary}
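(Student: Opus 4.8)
The plan is to reproduce the proof of Theorem~\ref{th:dual-net-minwt} essentially verbatim, with the sole change that every appeal to Lemma~\ref{lem:duality-in-perp} is replaced by the Proposition immediately above. Before doing so I would record two preliminaries. Since $\#(\bbX_n)=b^n$ with $b=\#(R)$, and a free $R$-module of rank $m$ has exactly $b^m$ elements, the implicit hypothesis $\#(P)=b^m$ of Theorem~\ref{th:dual-net-minwt} forces the free module $P$ to have rank $m$, which is what makes the statement meaningful. Second, the NRT-weight $\mu$ of \eqref{eq:NRT} depends only on which matrix entries vanish and not on any algebraic structure, so it applies unchanged to $\bbX_n^s=M_{s,n}(R)$; hence $\mbox{minNRT}(P^\vee)$ as in \eqref{eq:minNRT} is well defined for the ring-theoretic dual $P^\vee \subset \bbX_n^s$.

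The central bookkeeping step is to convert the minimum-weight condition into a statement about the subgroups $\bbX_n^s(Y)$. For $X=(x_1,\ldots,x_s)^\top$ set $d_i:=\mu(x_i)$; then $X \in \bbX_n^s(Y(d_1,\ldots,d_s))$ and $\mu(X)=d_1+\cdots+d_s$, while conversely any $X \in \bbX_n^s(Y(d_1,\ldots,d_s))$ satisfies $\mu(X)\le d_1+\cdots+d_s$. It follows that there is a nonzero $X \in P^\vee$ with $\mu(X)\le m-t$ if and only if $P^\vee \cap \bbX_n^s(Y)\neq 0$ for some $Y=Y(d_1,\ldots,d_s)$ with $d_1+\cdots+d_s\le m-t$. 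Taking contrapositives, $\mbox{minNRT}(P^\vee)\ge m-t+1$ is equivalent to $P^\vee \cap \bbX_n^s(Y)=0$ for every such $Y$.

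Next I would invoke the Proposition, which asserts precisely that $P^\vee \cap \bbX_n^s(Y)=0$ holds if and only if the composition $P \hookrightarrow \bbX_n^s \to \bbX_n^s(Y)$ is surjective, equivalently $Y$-uniform (surjectivity and uniformity agreeing for homomorphisms of finite groups). By Lemma~\ref{lem:net-by-uniformness}, $Y$-uniformity for all $Y(d_1,\ldots,d_s)$ with $d_1+\cdots+d_s\le m-t$ is exactly the condition that $P$ be a $(t,m,s)$-net. Concatenating these equivalences yields: $P$ is a $(t,m,s)$-net if and only if $\mbox{minNRT}(P^\vee)\ge m-t+1$, so that $m+1-\mbox{minNRT}(P^\vee)$ is the exact $t$-value, which is Theorem~\ref{th:dual-net-minwt} with $P^\perp$ replaced by $P^\vee$.

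I do not expect a genuine obstacle, because the one substantive ingredient, namely the duality equivalence ``$P^\vee \cap \bbX_n^s(Y)=0 \Leftrightarrow$ surjectivity'', has already been supplied by the Proposition, resting in turn on the fact that over a finite ring $\check{f}$ is surjective whenever $f$ is injective. The only place demanding a little care will be the correspondence in the second paragraph, namely checking that the family $\{\bbX_n^s(Y(d_1,\ldots,d_s)):d_1+\cdots+d_s\le m-t\}$ captures exactly the elements of $P^\vee$ of NRT-weight at most $m-t$; once that is in hand, the remainder is a formal transcription of the existing proof.
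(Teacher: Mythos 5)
Your proposal is correct and follows the route the paper intends: the corollary is stated without proof precisely because it is obtained by rerunning the proof of Theorem~\ref{th:dual-net-minwt} with the appeal to Lemma~\ref{lem:duality-in-perp} replaced by the preceding Proposition, which is exactly what you do. Your extra bookkeeping (transporting the NRT-weight to $\bbX_n^s$ and matching weight-$\le m-t$ elements of $P^\vee$ with the subgroups $\bbX_n^s(Y(d_1,\ldots,d_s))$) just makes explicit steps the paper leaves implicit.
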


The following result is a deeper result than it looks like.
\begin{lemma}
Let $R$ be a commutative associative ring of Krull dimension zero
(this condition is satisfied if $R$ is a finite commutative associative
ring). Then, if a morphism $f:R^n \to R^m$ of the $R$-module is
injective, then $\check{f}$ is surjective.
\end{lemma}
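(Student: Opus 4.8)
The plan is to rephrase both the hypothesis and the conclusion in terms of the ideal of maximal minors of a matrix representing $f$, and then to isolate and prove the one genuinely ring-theoretic fact that makes the dimension hypothesis work. First I would fix the matrix $A \in M_{m,n}(R)$ of $f$ in the standard bases, so that $f(v) = Av$. Under the canonical identifications $\check{R^k} = \Hom_R(R^k, R) \cong R^k$, the dual map $\check f$ is represented by the transpose $A^\top \in M_{n,m}(R)$ acting as a map $R^m \to R^n$. So the claim becomes: if $A$ is injective, then $A^\top$ is surjective.

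The key object is the ideal $I := I_n(A) \subseteq R$ generated by the $n \times n$ minors of $A$; since determinants are transpose-invariant, $I_n(A^\top) = I$ as well. For the hypothesis I would invoke McCoy's theorem: over any commutative ring, $A : R^n \to R^m$ fails to be injective exactly when its maximal minors admit a common nonzero annihilator, so injectivity of $A$ is equivalent to $\mathrm{Ann}_R(I) = 0$. For the conclusion, note that $\mathrm{coker}(A^\top)$ is finitely generated, hence $A^\top$ is surjective iff it is surjective modulo every maximal ideal $\mathfrak m$; over the field $R/\mathfrak m$ this means the reduced matrix has rank $n$, i.e. some $n \times n$ minor is a unit mod $\mathfrak m$, i.e. $I \not\subseteq \mathfrak m$. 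Thus $A^\top$ is surjective iff $I = R$. The lemma is therefore reduced to the implication $\mathrm{Ann}_R(I) = 0 \Rightarrow I = R$ for the finitely generated ideal $I$.

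The hard part, and the only place where Krull dimension zero enters, is this last implication. Suppose for contradiction that $I \subseteq \mathfrak m$ for some maximal ideal $\mathfrak m$. Localizing, $R_\mathfrak m$ is again zero-dimensional and local, so its maximal ideal coincides with its nilradical; since $I R_\mathfrak m$ is a finitely generated ideal contained in this nilradical, it is nilpotent, say $I^N R_\mathfrak m = 0$. Because $I^N$ is finitely generated, clearing denominators produces a single $s \notin \mathfrak m$ with $s I^N = 0$, and $s \neq 0$. On the other hand, $\mathrm{Ann}_R(I) = 0$ forces $\mathrm{Ann}_R(I^N) = 0$ by induction: if $x I^{k+1} = 0$ then $xI \subseteq \mathrm{Ann}_R(I^k)$, which is $0$ by the inductive hypothesis, so $xI = 0$ and hence $x = 0$. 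This contradicts $s \neq 0$, so no such $\mathfrak m$ exists and $I = R$.

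Beyond the bookkeeping, the subtle point to get right is that finite generation of $I$ is essential and not merely convenient: the implication $\mathrm{Ann}_R(I) = 0 \Rightarrow I = R$ is false for general ideals even in very nice zero-dimensional rings, as the ideal of finitely supported sequences in $\prod_i \mathbb{F}_2$ shows (it has zero annihilator yet is proper). What rescues the argument is precisely that $I = I_n(A)$ is generated by the finitely many maximal minors, which is what makes $I R_\mathfrak m$ nilpotent and lets a single denominator $s$ work. It is also worth recording that Noetherianness is never used; zero-dimensionality together with finite generation of $I$ suffices, which in particular covers every finite commutative ring.
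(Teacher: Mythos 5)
Your proof is correct, but it takes a genuinely different route from the paper's. The paper disposes of the lemma in one step by citing Proposition~2.6 of Nashier--Nichols \cite{NN91}: over a zero-dimensional commutative ring, an injective $A:R^n\to R^m$ can be completed to an invertible $m\times m$ matrix by adjoining columns, whence the transpose (which represents $\check f$) is surjective. You instead work with the ideal $I=I_n(A)$ of maximal minors and split the lemma into three standard ingredients: McCoy's theorem (injectivity of $A$ $\Leftrightarrow$ $\mathrm{Ann}_R(I)=0$), the Fitting-ideal/Nakayama criterion (surjectivity of $A^\top$ $\Leftrightarrow$ $I=R$), and the ring-theoretic implication $\mathrm{Ann}_R(I)=0\Rightarrow I=R$ for a \emph{finitely generated} ideal in a zero-dimensional ring, which you prove by localizing at a maximal ideal containing $I$, noting that $IR_{\mathfrak m}$ is nilpotent there, and clearing denominators to contradict $\mathrm{Ann}_R(I^N)=0$. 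All of these steps check out, including the transpose identification of $\check f$ and the induction showing $\mathrm{Ann}(I)=0\Rightarrow\mathrm{Ann}(I^N)=0$. What your approach buys is self-containedness and transparency: it replaces the appeal to the unimodular-completion result (which is strictly stronger than what is needed) by classical textbook facts, and it isolates exactly where zero-dimensionality and finite generation of $I$ are used --- your $\prod_i\mathbb F_2$ example correctly shows the latter is not dispensable. What the paper's citation buys is brevity, and the stronger structural statement (completability of $A$ to an invertible matrix) that may be of independent interest.
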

\begin{proof}
For such a ring, it is proved in \cite[Proposition~2.6]{NN91}
that for any $m \times n$ matrix $A$ of coefficients in $R$,
if the multiplication of $A$ from left to $R^n$ is injective,
then one can extend $A$ to an invertible
matrix by adding $m-n$ columns. In particular,
the multiplication of $A$ from the right to a
horizontal vector $R^n$ is surjective.
This means that the multiplication of the transpose
${}^\top A$ from the left is surjective, which is
the representation matrix of $\check{f}$,
hence $\check{f}$ is surjective.
\end{proof}

\section{Acknowledgements}

The authors would like to thank Harald Niederreiter for helpful
discussions and comments on the manuscript. The first author is
supported by an QE2 Fellowship of the Australian Research
Council.
The second author
is supported by
JSPS/MEXT Grant-in-Aid for Scientific Research
No.24654019, No.23244002, No.21654017.
J. D. is grateful for the hospitality of Prof. Matsumoto while
visiting
the University of Tokyo where most of this research was carried out.

\end{document}